\documentclass[opre,nonblindrev]{informs3} 

\usepackage{amssymb}
\usepackage{color}
\usepackage{rotating}
\usepackage{multirow}
\usepackage{latexsym}
\usepackage{secdot}
\usepackage{setspace}
\usepackage{wasysym}
\usepackage[us,12hr]{datetime}
\usepackage{bm}
\usepackage{pdfsync}
\usepackage{rccol}
\usepackage{mathrsfs}
\usepackage{epsfig}
\usepackage[margin=1in,paper=letterpaper]{geometry}
\usepackage{url}
\usepackage{multibib}
\newcites{appendix}{Online Appendix References}
\usepackage{enumitem}
\usepackage{stackrel}
\usepackage{soul}
\usepackage{appendix}
\usepackage{titlesec}
\usepackage{booktabs}
\usepackage{longtable}
\usepackage{float}
\usepackage{placeins}

\newtheorem{thm}{Theorem}[section]
\newtheorem{corollary}[thm]{Corollary}
\newtheorem{lem}[thm]{Lemma}
\newtheorem{prop}[thm]{Proposition}

{\theoremstyle{THkey}}

\OneAndAHalfSpacedXI 

\usepackage{natbib}
 \bibpunct[, ]{(}{)}{,}{a}{}{,}%
 \def\bibfont{\small}%
 \def\bibsep{\smallskipamount}%

\EquationsNumberedThrough    

\MANUSCRIPTNO{} 

\newcommand{\ts}{{\,}}

\newcommand{\Ccal}{{\mathcal C}}

\newcommand{\Jcal}{{\mathcal J}}

\newcommand{\Ncal}{{\mathcal N}}
\newcommand{\Pcal}{{\mathcal P}}

\newcommand{\Tcal}{{\mathcal T}}
\newcommand{\Xcal}{{\mathcal X}}

\newcommand{{\qhat}}{{\widehat q}}
\newcommand{{\xhat}}{{\widehat x}}
\newcommand{{\yhat}}{{\widehat y}}
\newcommand{{\zhat}}{{\widehat z}}
\newcommand{{\alphahat}}{{\widehat \alpha}}

\newcommand{{\pbar}}{{\overline p}}
\newcommand{{\ubar}}{{\overline u}}
\newcommand{{\xbar}}{{\overline x}}
\newcommand{{\ybar}}{{\overline y}}
\newcommand{{\zbar}}{{\overline z}}
\newcommand{{\Jbar}}{{\overline J}}
\newcommand{{\Cbar}}{{\overline C}}
\newcommand{{\Pbar}}{{\overline P}}
\newcommand{{\Rbar}}{{\overline R}}
\newcommand{{\betabar}}{{\overline \beta}}
\newcommand{{\nubar}}{{\overline \nu}}
\newcommand{{\thetabar}}{{\overline \theta}}
\newcommand{{\zetabar}}{{\overline \zeta}}
\newcommand{{\Deltabar}}{{\overline \Delta}}
\newcommand{{\Brm}}{\text{\rm B}}
\newcommand{{\Grm}}{\text{\rm G}}
\newcommand{{\Mrm}}{\text{\rm M}}
\newcommand{{\Nrm}}{\text{\rm N}}
\newcommand{{\Rrm}}{\text{\rm R}}
\newcommand{{\Srm}}{\text{\rm S}}
\newcommand{{\Urm}}{\text{\rm U}}
\newcommand{{\Xrm}}{\text{\rm X}}
\newcommand{{\Yrm}}{\text{\rm Y}}
\newcommand{{\Fs}}{{\text{\sf F}}}
\newcommand{{\Ss}}{{\text{\sf S}}}
\newcommand{{\sigmahat}}{{\widehat \sigma}}

\newcommand{\evec}{{\bm e}}
\newcommand{\pvec}{{\bm p}}
\newcommand{\qvec}{{\bm q}}
\newcommand{\uvec}{{\bm u}}

\newcommand{\xvec}{{\bm x}}
\newcommand{\yvec}{{\bm y}}
\newcommand{\wvec}{{\bm w}}

\newcommand{\alphavec}{{\bm \alpha}}
\newcommand{\etavec}{{\bm \eta}}

\newcommand{\phivec}{{\bm \varphi}}
\newcommand{\pvechat}{\widehat{\bm p}}
\newcommand{\qvechat}{\widehat{\bm q}}

\newcommand{\alphavechat}{\widehat{\bm \alpha}}

\newcommand{{\Prmbar}}{{\overline{\text{P}}}}

\newcommand{\opt}{{\text{\sf opt}}}
\newcommand{\rev}{{\text{\sf rev}}}

\renewcommand{\qed}{\hfill \mbox{\raggedright \rule{0.1in}{0.1in}}}
\newcommand{\ind}[1]{{\bf 1}{(#1)}}

\begin{document}

\RUNAUTHOR{}
\RUNTITLE{\normalfont{De-Randomization  for Dynamic Assortment Optimization}}

\TITLE{\large 
Killing the Case for Randomization in Dynamic Assortment Optimization}

\ARTICLEAUTHORS{
\vspace{-3mm}
\AUTHOR{\mbox{Mikhail Fadin, Huseyin Topaloglu}}
\AFF{
School of Operations Research and Information Engineering, Cornell Tech, New York, NY 10044, USA}
\vspace{-1.5mm}
\EMAIL{\footnotesize mf853@cornell.edu,~topaloglu@orie.cornell.edu}
\\
\vspace{-0mm}
{\normalsize June 24, 2026}
\vspace{-3mm}
}

\ABSTRACT{%
One of the traditional approaches for constructing approximate policies for dynamic assortment optimization problems is to use sampling-based inventory-agnostic policies. Such policies are called sampling-based, as they sample an assortment of products from a fixed distribution at each time period to offer to a customer of each type. Such policies are called inventory-agnostic, as the sampled assortments may include products without remaining inventories, so if a customer chooses a product without remaining inventories, then she leaves without a purchase. Inventory-agnostic nature of a policy is not a concern, because it is known that if the policy samples an assortment that includes products without remaining inventories, then dropping the products without remaining inventories does not degrade the performance. However, sampling-based nature of a policy is a concern, because sampling brings another source of uncertainty in the performance and may erode trust in the platform. In this paper, we give an algorithm to de-randomize any \mbox{sampling-based} inventory-agnostic policy, so the de-randomized policy offers a deterministic sequence of assortments within the support of the original policy without degrading the performance. 
Furthermore, we give a variation of our de-randomization algorithm that searches for a deterministic sequence of assortments beyond the support of the original policy.~We~show~that we can implement the latter variation efficiently as long as we can solve the static assortment optimization problem under the choice model governing the choice process of the customers. 
Either way, if the original policy has a performance guarantee, then the de-randomized policy performs at least just as well. As our crowning technical contribution, we study locally-optimal deterministic policies, where changing any single one of the assortments in the policy does not improve the total expected revenue. We show that \underline{\it any} locally-optimal policy has a performance guarantee of $\frac 12 -\epsilon$ when compared with the best sampling-based policy. Noting that the best sampling-based policy has a \mbox{constant factor} performance guarantee, our result yields a fully novel approach for dynamic assortment optimization. Our computational work indicates that a de-randomized policy can substantially improve the original policy and working with assortments beyond the support of the original policy can be quite useful.
}



\maketitle 

\vspace{-7mm}

\section{Introduction}

\vspace{-1mm}

In dynamic assortment optimization problems, we have products with limited inventories. During the course of the selling horizon customers with different types arrive into the system. Observing the type of the arriving customer, we offer an assortment of products as a function of the remaining inventories of the products and time left in the selling horizon. The customer either makes a purchase within the offered assortment or decides to leave without making a purchase. The goal is to find a policy to pick the assortment of products to offer to each customer to maximize the total expected revenue over the selling horizon. Dynamic assortment optimization problems started gaining attention over the past decade, partly due to their ability to help online retailers customize the assortments of products offered to their customers as a function of the information about the preferences of each customer and remaining inventories of the products. Dynamic programming formulation  of the dynamic assortment optimization problem involves a high-dimensional state variable, keeping track of the remaining inventories of the products. Thus, it is computationally difficult to compute the optimal policy. One of the traditional approaches for constructing approximate policies with performance guarantees is to use sampling-based inventory-agnostic policies. Such policies are sampling-based, as they sample an assortment of products from a fixed distribution that depends on the type of the arriving customer and time left in the selling horizon.  Such policies are  inventory-agnostic, as the sampled assortments may include products without remaining inventories, so if a customer chooses a product without remaining inventories, then she leaves without a purchase. There exist variety of sampling-based inventory-agnostic policies with performance guarantees. Inventory-agnostic nature of a policy is not a concern because it is known that if we drop the products without remaining inventories from the sampled assortment, then the performance of a policy does not degrade, as long as the choice model for the customers satisfies mild conditions. However, the sampling-based nature of a policy is a concern, because sampled assortments bring an artificial source of uncertainty in the performance.

We focus on de-randomizing a sampling-based inventory-agnostic policy. Given a \mbox{sampling-based} inventory-agnostic policy, we give an algorithm to come up with a policy that offers a deterministic sequence of assortments within the support of the random assortments of the original policy, where each element in the sequence corresponds to the fixed assortment that we offer to a customer of a particular type at a particular time period. While the \mbox{de-randomized} policy is only guaranteed to not degrade the performance of the original policy, it often provides significant improvements in practice. We also show that if we can solve the static assortment optimization problem under the choice model governing the choices of the customers, then we can come up with another \mbox{de-randomized} policy that is not limited to the original support and can perform better. Thus, by looking at assortments beyond the support of the original policy, we can potentially obtain even better de-randomized policies. Lastly, we consider \mbox{locally-optimal} deterministic policies, where changing any single one of the assortments in the policy does not improve its performance. We show that such policies have a performance guarantee of \mbox{$\frac 12 - \epsilon$} when compared with the best sampling-based policy. We give an efficient algorithm to find a \mbox{locally-optimal} policy, resulting in a novel approach for dynamic assortment optimization.

{\bf \underline{Technical Contributions\phantom{p}\!\!\!}:} We give algorithms to de-randomize any policy within and beyond the support of its assortments. We provide a performance guarantee for locally-optimal policies.

{\it \underline{De-Randomizing a Sampling-Based Inventory-Agnostic Policy}.} Starting from a sampling-based inventory-agnostic policy, we give an algorithm to de-randomize the policy in running time polynomial in  input size, so that we get a policy that offers a deterministic sequence of assortments. There are two pieces to our algorithm. First, we, naturally, show that if we replace one of the random assortments in a sampling-based inventory-agnostic policy with the best possible deterministic assortment while keeping the other parts of the policy untouched, then the total expected revenue of the policy does not degrade. Second, we give an efficient approach to compute the total expected revenue of a sampling-based inventory-agnostic policy. To establish both of the pieces, it is crucial that the policy on hand is both sampling-based and inventory-agnostic, so we strictly exploit the fact that we work with sampling-based inventory-agnostic policies. Letting $T$ be the number of time periods in the selling horizon, $n$ be the number of products and $m$ be the number of customer types, our algorithm de-randomizes a policy in a running time of $O((m n+T)\ts nT^2)$. In this running time, we use the Caratheodory theorem to argue that a randomized policy never needs to randomize over more than $n+1$ assortments. Letting $c_{\min}$ be the smallest initial inventory of a product, there are sampling-based inventory-agnostic policies that attain constant factor performance guarantees or asymptotic optimality at rate $1- O\Big(\frac{1}{\sqrt{c_{\min}}}\Big)$. De-randomizing these policies using our work, we get the same performance guarantees using a deterministic sequence of assortments. It is remarkable that we can attain asymptotic optimality through a deterministic sequence of assortments. 

{\it \underline{Beyond the Support of a Sampling-Based Inventory-Agnostic Policy}.} Once we de-randomize a policy, we obtain a deterministic sequence of assortments, each assortment within the support of the original policy, such that the de-randomized policy is guaranteed to perform at least as well as the original policy, as discussed in the previous paragraph. We may obtain even better \mbox{de-randomized} policies by considering assortments outside the support of the original policy, but it is not clear that we can efficiently find the best possible assortment outside the support of the original policy. Assuming that we can solve the static assortment optimization problem under the choice model governing the choices of the customers, we give an algorithm that picks the best possible deterministic assortment when de-randomizing a policy, even when the best possible assortment is chosen outside the support of the original policy. The result is \mbox{non-trivial} because if we change the assortment offered at a particular time period, then we change the probability of running out of the inventory of a product at that time period, which has implications on the expected revenue at other time periods. We exploit a total expected revenue expression for a sampling-based inventory-agnostic policy. Static assortment optimization problem under many choice models, including the multinomial logit, nested logit and Markov chain choice models, admits an efficient solution, so our assumption is mild.

{\it \underline{Half-Approximation for Locally-Optimal Policies}.} While they have substantial implications and have never been used to de-randomize a policy, some of the techniques that we use for de-randomization are considered folklore. However, as our crowning technical contribution, we give a performance guarantee for locally-optimal policies. In a locally-optimal policy, we offer a deterministic sequence of assortments and changing any single assortment does not improve the performance of the policy. We say that the policy is approximately locally-optimal if changing any of the deterministic assortments does not improve the performance by more than $\epsilon$. We show that an approximately locally-optimal policy has a performance guarantee of $\frac 12 - O(\epsilon)$ when compared with the best sampling-based policy. We show that we can find an approximately locally-optimal policy in running time polynomial in input size and $\frac 1 \epsilon$. This result provides an entirely novel approach for computing policies with performance guarantees. In particular, because we can efficiently compute the total expected revenue of any inventory-agnostic policy, we can get a policy with a performance guarantee by exchanging the assortments offered by the policy with the best alternatives and checking whether the exchange provides an improvement in the performance.

Our performance guarantee for approximately locally-optimal policies has multiple novel steps using recent results on continuous DR-submodular functions. We argue that searching over inventory-agnostic policies is equivalent to searching over the purchase probability of each product at each time period. Following this result, we show that the total expected revenue of an inventory-agnostic policy is a monotone and continuous DR-submodular function of the purchase probabilities. Considering the problem of maximizing a monotone and continuous DR-submodular function, using the results in \cite{HaSo17}, we show that a stationary point yields a  half-approximation. Finally, we show that the product purchase probabilities of a locally-optimal policy yield a stationary point of the total expected revenue when viewed as a function of the purchase probabilities of the products. Collecting these results yields the performance guarantee. To our knowledge, DR-submodularity had no prior use for optimizing the parameters of a policy.

{\it \underline{Inventory-Aware Counterpart of a Policy}.} Whether we de-randomize a policy or use a locally-optimal policy, we end up with a policy that offers a deterministic sequence of assortments, but the policy is still inventory-agnostic in the sense that it can offer an assortment that includes products without remaining inventories. If a customer chooses a product without remaining inventories, then she leaves without a purchase. Offering products without remaining inventories is not sensible in many practical settings. To complete the discussion, we show that if a policy attempts to offer an assortment that includes products without remaining inventories, then dropping the products without remaining inventories never degrades the performance of the policy, as long as the choice model governing the choices of the customers satisfies the substitutability assumption. Thus, the fact that our policies are inventory-agnostic is not a concern, because we can implement their inventory-aware counterpart without degrading their performance. 

\vspace{-1mm}

{\bf \underline{Related Literature\phantom{p}\!\!\!}:} There is a growing body of literature on dynamic assortment optimization problems.  \cite{MaSi21} give policies with constant factor guarantees, as well as asymptotic optimality guarantees as the initial inventories of the products get large. The authors visit de-randomization, but their de-randomization approach uses simulation to estimate certain total expected revenues, so the policies are not fully de-randomized as they are subject to simulation error. \cite{FeNi24} give policies with asymptotic optimality guarantees. To ensure that their policies do not offer products without remaining inventories, they give a procedure that either removes the products without remaining inventories or offers the empty assortment without changing the purchase probability of each product. The possibility of offering the empty assortment is useful to establish a performance guarantee, but can degrade the practical performance of the policy. \cite{BaEl25} give a single policy with both constant factor and asymptotic optimality guarantees. To our knowledge, theirs is the first work to show that dropping the products without remaining inventories does not degrade the performance of a policy. We include an analogous result for completeness. \cite{SuUd25} focus on the problem when the customers choose according to the multinomial logit model and give improved constant factor guarantees, while also allowing the number of customer arrivals over the selling horizon to be a general random variable.

The papers discussed so far use an optimal solution to a linear program to guide the decisions of the policy. Our de-randomization approach can work with any sampling-based inventory-agnostic policy that may or may not have been obtained through a linear program. Also, our performance guarantee for the locally-optimal policies is very different, because a locally-optimal policy can be obtained through coordinate-wise exchanges of the assortments starting from any policy. 
On the other hand, \cite{GoNaRu14} give performance guarantees by scaling the revenue associated with a product by a factor that depends on its remaining inventory and following a myopic policy. \cite{RuSuTo17} give a constant factor performance guarantee by approximating the value functions. \cite{MaRuSuTo18} extend this work to a network of resources by using more involved value function approximations. \cite{GoGo22} give a competitive ratio for myopic policies. \cite{MaRo22} use an optimal solution to a linear program to guide the evaluation of the value of a unit of product. Work on dynamic assortment optimization goes back to network revenue management with customer choice; see, for example, \cite{GaIy04}, \cite{RyLi04}, \cite{JaKu12}, \cite{To13},  \cite{VoZh13}. 

{\bf \underline{Organization}:} In Section \ref{sec:form}, we describe the problem setting and sampling-based inventory-agnostic policies. In Section \ref{sec:derand}, we give our approach for de-randomizing a policy by considering assortments within and beyond the support of the random assortments of the original policy. In Section \ref{sec:local_opt}, we give a performance guarantee for an approximately locally-optimal policy, along with an algorithm to compute one. In Section \ref{sec:inv_aware}, we show that dropping products without remaining inventories from the assortments offered by a policy does not degrade the performance of the policy. In Section \ref{sec:numerics}, we give our numerical experiments.




%

\newpage

\section{Model}
\label{sec:form}

\vspace{-3mm}

We have $n$ products indexed by $\Ncal = \{1,\ldots,n\}$. The revenue of product $i$ is $r_i \geq 0$. The initial inventory of product $i$ is $c_i$. We have $m$ customer types indexed by $\Jcal = \{1,\ldots,m\}$. We divide the selling horizon into a number of time periods, each corresponding to a small enough interval of time that there is one customer arrival at each time period. We have $T$ time periods in the selling horizon indexed by $\Tcal = \{1,\ldots,T\}$. A customer of type $j$ arrives at time period $t$ with probability $\lambda_{jt}$. We have one customer arrival at each time period, so $\sum_{j \in \Jcal} \lambda_{jt} = 1$.~If there is a positive probability of no customer arrival, then we can define a dummy customer type that does not choose any of the products and set the arrival probability of the dummy customer type as the no customer arrival probability. If we offer the assortment of products $S \subseteq \Ncal$ to a customer of type~$j$, then the customer purchases product $i$ with probability $\phi_{ij}(S)$. We have $\phi_{ij}(S) = 0$ for all $i \not \in S$. The choices of the customers are governed by a choice model with the substitutability property, which is to say that the choice probability of a product cannot increase when we add a product to the assortment. In other words, we have $\phi_{ij}(S \cup \{\ell\}) \leq \phi_{ij}(S)$ for all $i \in S$, $\ell \not \in S$, $j \in \Jcal$ and $S \subseteq \Ncal$. A choice model satisfies the substitutability property as long as it is based on random utility maximization, where a customer associates random utilities with the options and chooses the option with the largest utility. A broad class of choice models are based on random utility maximization.

We focus on \underline{\it sampling-based inventory-agnostic} policies. Our policies are sampling-based in the sense that they sample an assortment of products from a fixed distribution at each time period to offer to a customer of each type without taking into account the remaining inventories of the products. Moreover, our policies are inventory-agnostic in the sense that the assortment they sample may include products without remaining inventories. If the customer chooses a product without remaining inventory, then the customer leaves without a purchase. Throughout the paper, unless we state otherwise, we mean a sampling-based inventory-agnostic policy when we refer to a policy. We use the random assortment $\Srm_{jt}^\mu$ to capture the assortment that policy $\mu$ offers to a customer type $j$ at time period~$t$.~The random assortments $(\Srm_{jt}^\mu : j \in \Jcal,~t \in \Tcal)$ are all independent.~Using inventory-agnostic policies is not a practical concern, because if the customers choose according to a choice model that satisfies the substitutability property, then we can ensure that an inventory-agnostic policy never offers a product without remaining inventories, while also ensuring that the total expected revenue of the policy does not degrade. In particular, we can show that if we drop the products without remaining inventories from the assortment sampled by an inventory-agnostic policy, then the total expected revenue of the policy does not degrade. We will re-visit ensuring that an inventory-agnostic policy never offers a product without remaining inventories.

Considering policy $\mu$ characterized by the random assortments $(\Srm_{jt}^\mu : j \in \Jcal,~t \in \Tcal)$, we use $\Omega_{jt}^\mu$ to denote the support of the random assortment $\Srm_{jt}^\mu$. In this case, if a customer of type $j$ arrives at time period $t$, then policy $\mu$ offers assortment $S \in \Omega_{jt}^\mu$ to the customer with probability $\mathbb P \{ \Srm_{jt}^\mu = S\}$. If the customer chooses a product without remaining inventories, then the customer leaves without a purchase. Otherwise, the inventory of the chosen product is depleted by one unit, accumulating the revenue corresponding to the chosen product. 
There are a number of \mbox{sampling-based} \mbox{inventory-agnostic} policies that provide performance guarantees when compared with the optimal policy that offers assortments by taking into account the remaining capacities of the resources and never offering any products without remaining inventories. To our knowledge, all of these policies are obtained by solving a linear programming approximation and offering assortments sampled according to an optimal solution to the linear programming approximation; see \cite{MaSi21}, \cite{FeNi24} and \cite{BaEl25}. The sampling-based nature of these policies is unappealing and impractical, as randomly sampled assortments bring an artificial source of uncertainty in the performance of the policy and there already exists an optimal policy with decisions deterministically depending on the remaining inventories of the products and time left in the selling horizon. Given a sampling-based inventory-agnostic policy, we study constructing policies that offer a deterministic sequence of assortments without degrading the performance, as well as constructing locally-optimal policies with performance guarantees. 

\vspace{-1mm}

\section{De-Randomizing an Inventory-Agnostic Sampling-Based Policy}
\label{sec:derand}

\vspace{-1mm}

We focus on de-randomizing a sampling-based inventory-agnostic policy without degrading the total expected revenue of the policy. We follow two steps. First, we show that if we replace one of the random assortments in a  policy with the best possible deterministic assortment keeping the other parts of the policy untouched, then the total expected revenue of the policy does not decrease. By the best possible deterministic assortment, we mean the one that maximizes the total expected revenue of the policy when we keep the other parts of the policy untouched. Second, we show that we can find the best possible deterministic assortment efficiently. Considering policy $\mu$ characterized by the random assortments $(\Srm_{jt}^\mu : j \in \Jcal,~t \in \Tcal)$, recall that we use $\Omega_{jt}^\mu$ to denote the support of the random assortment $\Srm_{jt}^\mu$.~We use $\Mrm_{jt}(\mu,S)$ to denote the policy obtained by replacing the random assortment $\Srm_{jt}^\mu$ in policy $\mu$ with the deterministic assortment $S$. In the next proposition, we show that the total expected revenue of policy $\Mrm_{jt}(\mu,S)$ for some assortment $S \in \Omega_{jt}^\mu$ is at least as large as that of policy $\mu$. The intuition behind this result is that if we condition on having $\Srm_{jt}^\mu = S$, then the total expected revenue of policy $\mu$ is precisely equal to the total expected revenue of policy $\Mrm_{jt}(\mu,S)$. Thus, we can compute the total expected revenue of policy $\mu$ by conditioning on having $\Srm_{jt}^\mu = S$ for all $S \in \Omega_{jt}^\mu$ and computing the total expected revenue of policy $\Mrm_{jt}(\mu,S)$. Throughout the paper, we use $\rev(\mu)$ to denote the total expected revenue of policy $\mu$.

\begin{prop}
[Replacing Random Assortments]
\label{pro:derand}
Using $\rev(\mu)$ to denote the total expected revenue of policy $\mu$, we have 
\begin{align*}
\rev(\mu) 
~\leq~
\max_{S \in \Omega_{jt}^\mu} \rev(\Mrm_{jt}(\mu,S)).
\end{align*}

\end{prop}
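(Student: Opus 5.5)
The plan is to prove the identity
\begin{align*}
\rev(\mu) ~=~ \sum_{S \in \Omega_{jt}^\mu} \mathbb P\{\Srm_{jt}^\mu = S\}\, \rev(\Mrm_{jt}(\mu,S)),
\end{align*}
after which the statement follows at once. The weights $\mathbb P\{\Srm_{jt}^\mu = S\}$ are nonnegative and sum to one over $S \in \Omega_{jt}^\mu$, so the right side is a convex combination of the values $\rev(\Mrm_{jt}(\mu,S))$. It is therefore bounded by the largest of them. If $\Omega_{jt}^\mu$ is infinite this would need care, but assortments are subsets of the finite set $\Ncal$, so the support is finite and the maximum is attained.

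To establish the identity, I would build all randomness of policy $\mu$ on one probability space. The ingredients are the customer type $\Jrm_\tau$ arriving at each period $\tau$, the random assortments $(\Srm_{j'\tau}^\mu : j' \in \Jcal,\, \tau \in \Tcal)$, and the choice outcomes. I would represent the choices through independent uniform random variables $\Urm_\tau$, so that the customer's choice at period $\tau$ is a deterministic function of $\Urm_\tau$, the arriving type, and the offered assortment. The sales and revenue trajectory is then a deterministic function of all these primitives, because of the inventory-agnostic rule: a chosen product is sold if inventory remains, and otherwise the customer leaves. Denote the total revenue along a sample path by $R(\Srm_{jt}^\mu, \text{rest})$, where ``rest'' collects every primitive except $\Srm_{jt}^\mu$.

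The key step is to use the independence built into the definition of a policy. The random assortments are mutually independent by assumption, and they are sampled without regard to inventories. Hence $\Srm_{jt}^\mu$ is independent of ``rest'', where I take the arrivals and the choice uniforms to be independent of the assortments as well. This means that conditioning on $\Srm_{jt}^\mu = S$ leaves the joint law of ``rest'' unchanged, so that
\begin{align*}
\mathbb E[R \mid \Srm_{jt}^\mu = S] ~=~ \mathbb E[R(S, \text{rest})].
\end{align*}
Policy $\Mrm_{jt}(\mu,S)$ has exactly the same primitives, with $\Srm_{jt}^\mu$ replaced by the constant $S$. Its expected revenue is therefore $\mathbb E[R(S,\text{rest})] = \rev(\Mrm_{jt}(\mu,S))$. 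Applying the tower property over the finitely many values of $\Srm_{jt}^\mu$ then yields the identity.

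The main obstacle, modest here, is stating the independence correctly. I have to make sure that the assortment sampled for $(j,t)$ does not influence how the other periods' assortments are drawn, and that inventory levels enter only through the deterministic transition rule, not through the sampling distributions. This holds precisely because the policy is both sampling-based and inventory-agnostic. For an inventory-aware policy the argument would break, since later assortment distributions would depend on the inventory state, which in turn depends on $\Srm_{jt}^\mu$. I would therefore spell out this coupling explicitly rather than appeal to intuition.
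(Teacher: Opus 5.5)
Your proposal is correct and matches the paper's proof. The paper conditions on $\Srm_{jt}^\mu = S$, uses the independence of the sampled assortments from each other and from arrivals and choices to identify the conditional expected revenue with $\rev(\Mrm_{jt}(\mu,S))$, and concludes from the convex-combination identity; your explicit coupling just spells out the independence the paper asserts in one line. Your closing explanation of when the argument breaks is slightly off, though this does not affect the proof: under your coupling, later assortments that depend on the inventory state (and hence on $\Srm_{jt}^\mu$) are harmless, since revenue stays a deterministic function of $\Srm_{jt}^\mu$ and primitives independent of it, so all that is needed is that the draw at $(j,t)$ itself be independent of everything else.
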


\vspace{-3mm}

\noindent{\it Proof.} Because the assortments that policy $\mu$ samples for different customer types and at different time periods are independent of each other, as well as independent of the arrivals and choices of the customers, conditional on having $\Srm_{jt}^\mu = S$, the total expected revenue of policy $\mu$ is the total expected revenue of policy $\Mrm_{jt}(\mu,S)$. Thus, we have $\rev(\mu) = \sum_{S \in \Omega_{jt}^\mu} \mathbb P \{ \Srm_{jt}^\mu = S \} \ts \rev(\Mrm_{jt}(\mu,S))$, so there must exist an assortment $S \in \Omega_{jt}^\mu$ such that $\rev(\Mrm_{jt}(\mu,S)) \geq \rev(\mu)$.   \qed

By the proposition above, there exists a deterministic assortment in the support $\Omega_{jt}^\mu$ such that if we replace the random assortment $\Srm_{jt}^\mu$ with this deterministic assortment, then we obtain a policy that performs at least as well as policy $\mu$. We focus on the maximization problem in the proposition. Under policy $\mu$, the demand for product $i$ from a customer of type $j$ at time period~$t$ is a Bernoulli random variable with parameter $\eta_{ijt}^\mu = \sum_{S \in \Omega_{jt}^\mu} \mathbb P \{ \Srm_{jt}^\mu = S \} \ts \phi_{ij}(S)$. By the last equality, using the $n$-dimensional vectors $\etavec_{jt}^\mu = (\eta_{ijt}^\mu : i \in \Ncal)$ and $\Phi_{jt}(S) = (\phi_{ij}(S) : i \in \Ncal)$, we can represent the vector $\etavec_{jt}^\mu$ by using a convex combination of the vectors $\{ \Phi_{jt}(S) : S \in \Omega_{jt}^\mu\}$. In this case, by the Caratheodory theorem, it follows that we can represent the vector~$\etavec_{jt}^\mu$ by using a convex combination of at most $1+n$ of the vectors $\{ \Phi_{jt}(S) : S \in \Omega_{jt}^\mu\}$, so we can proceed with the understanding that the support $\Omega_{jt}^\mu$ includes at most $1+n$ assortments. Therefore, we can solve the maximization problem in the proposition simply by enumeration over the support $\Omega_{jt}^\mu$.  

To solve the maximization problem in the proposition through enumeration, we need to be able to compute the total expected revenue $\rev(\Mrm_{jt}(\mu,S))$, which is our next focus.

\vspace{0.5mm}

{\bf \underline{Computing the Total Expected Revenue of a Policy}:}
\\
\indent We focus on computing the total expected revenue of a sampling-based inventory-agnostic policy. Under policy $\mu$, we offer the assortment $S$ to a customer of type $j$ at time period~$t$ with probability $\mathbb P \{ \Srm_{jt}^\mu = S \}$. Thus, under policy $\mu$, the demand for product $i$ at time period $t$ is a Bernoulli random variable with parameter $\varphi_{it}^\mu = \sum_{j \in \Jcal}  \lambda_{jt} \sum_{S \in \Omega_{jt}^\mu} \mathbb P \{ \Srm_{jt}^\mu = S \} \ts \phi_{ij}(S)$. In this case, using $\Brm_{it}^\mu$ to denote a Bernoulli random variable with parameter $\varphi_{it}^\mu$, the total expected sales for product $i$ under policy $\mu$ is \mbox{$G_i(\mu) = \mathbb E \{ \min\{ c_i , \sum_{t \in \Tcal} \Brm_{it}^\mu\}\}$}, in which case, the total expected revenue of policy $\mu$ is given by $\rev(\mu) = \sum_{i \in \Ncal} r_i \ts G_i(\mu)$. Because the assortments offered by policy $\mu$ at different time periods are independent, the vectors of random variables $(\Brm_{it}^\mu : i \in \Ncal)$ and $(\Brm_{is}^\mu : i \in \Ncal)$ are independent of each other for $t \neq s$. The random variable $\sum_{t \in \Tcal} \Brm_{it}^\mu$ involves a sum of independent Bernoulli random variables, so characterizing the distribution of this random variable is difficult, but we can use a dynamic program to compute the total expected sales $G_i(\mu)$ for product $i$. The state variable in the dynamic program is the remaining inventory of product $i$. Using the boundary conditions $J_{it}(0;\mu) = 0$ for all $t \in \Tcal$ and $J_{i,T+1}(q;\mu) = 0$ for all $q=0,\ldots,c_i$, consider the dynamic program
\begin{align}
J_{it}(q;\mu) = \varphi_{it}^\mu \ts \Big\{ 1 + J_{i,t+1}(q-1; \mu) \Big\} + (1 - \varphi_{it}^\mu) \ts J_{i,t+1}(q; \mu). 
\label{eqn:sales_dp}
\end{align}
In (\ref{eqn:sales_dp}), we have a sale for product $i$ at time period $t$ with probability $\varphi_{it}^\mu$, in which case, we accumulate one unit of sale and deplete one unit of inventory. We do not have a sale for product $i$ at time period $t$ with probability $1- \varphi_{it}^\mu$, in which case, we do not accumulate any sales or deplete any inventory. By the boundary condition $J_{it}(0;\mu)= 0$, if we run out of inventory for product $i$, then we do not accumulate any sales. Computing the value functions $\{ J_{it}(\cdot;\mu) : t \in \Tcal\}$ through the dynamic program above, we have $G_i(\mu) = J_{i1}(c_i;\mu)$. Noting that $|\Omega_{jt}^\mu| = O(n)$, we can \mbox{pre-compute}  $\varphi_{it}^\mu = \sum_{j \in \Jcal}  \lambda_{jt} \sum_{S \in \Omega_{jt}^\mu} \mathbb P \{ \Srm_{jt}^\mu = S \} \ts \phi_{ij}(S)$ for all $i \in \Ncal$ and $t \in \Tcal$ in $O(mn^2T)$ operations. Thus, we can compute $J_{it}(q;\mu)$ for one pair $(q,t) \in \mathbb Z \times \Tcal$ in $O(1)$ operations when solving the dynamic program in (\ref{eqn:sales_dp}). Because there is at most one unit of demand at each time period, we can assume that the initial inventory of each product is at most $T$, so we can compute  $J_{it}(q;\mu)$ for all $q =0,\ldots,c_i$, $i \in \Ncal$ and $t \in \Tcal$ in $O(nT^2)$ operations. Therefore, we can solve the dynamic program in (\ref{eqn:sales_dp}) for all $i \in \Ncal$ in $O((m n+T)\ts nT)$ operations, yielding the total expected revenue of policy $\mu$.

{\bf \underline{De-Randomizing a Policy}:}
\\
\indent By Proposition \ref{pro:derand}, if we replace one of the random assortments in a policy with the best possible deterministic assortment in its support, then the total expected revenue of the policy does not degrade. Furthermore, once we replace one of the random assortments in a policy with a deterministic assortment, we can efficiently compute the total expected revenue of the resulting policy by using the dynamic program in (\ref{eqn:sales_dp}). Using these two results, we give an algorithm to de-randomize a \mbox{sampling-based} inventory-agnostic policy $\mu$. In our algorithm, we loop over all time periods and customer types. For each time period~$t$ and customer type $j$, we replace the random assortment $\Srm_{jt}^\mu$ with the best possible deterministic assortment in the support of $\Srm_{jt}^\mu$. To find the best possible deterministic assortment, we check the total expected revenue of policy $\Mrm_{jt}(\mu,S)$ for all $S \in \Omega_{jt}^\mu$ and pick the assortment that provides the largest total expected revenue. Once we loop over all time periods and customer types, we have a policy that offers a deterministic sequence of assortments to all customer types at all time periods. We consider the following algorithm. 

\noindent {\bf Step 0.} Start with a sampling-based inventory-agnostic policy $\mu$ to de-randomize.
\\
\noindent {\bf Step 1.} For all $t =1,\ldots,T$ and $j=1,\ldots,m$ do;
\\
\indent ~~~~~~~~\ts Setting $S^* = \arg\max_{S \in \Omega_{jt}^\mu} \rev(\Mrm_{jt}(\mu,S))$, redefine policy $\mu$ as $\Mrm_{jt}(\mu,S^*)$.

We give an alternative characterization of the total expected revenue of a policy that becomes useful at numerous points in the paper. One immediate use is that it avoids computing $\rev(\Mrm_{jt}(\mu,S))$ from scratch for each $S \in \Omega_{jt}^\mu$ in our algorithm. Letting $\ind{\cdot}$ be the indicator function, by the identity $\min\{ c_i , \sum_{k \in \Tcal} \Brm_{ik}^\mu\} = \min\{ c_i , \sum_{k \in \Tcal \setminus \{t\}} \Brm_{ik}^\mu\} + \Brm_{it}^\mu \ts \ind{\sum_{k \in \Tcal \setminus \{t\}} \Brm_{ik}^\mu \leq c_i - 1} $, we have
%
%
\begin{align}
G_i(\mu) = \mathbb E\Bigg\{ \min \Big\{ c_i ,\!\!\! \sum_{k \in \Tcal \setminus \{t\}} \Brm_{ik}^\mu \Big\}\Bigg\} + \varphi_{it}^\mu \ts\ts \mathbb P \Bigg\{\sum_{k \in \Tcal \setminus \{t\}} \Brm_{ik}^\mu \leq c_i - 1 \Bigg\}.
\label{eqn:rev_dec}
\end{align}
%
%
\noindent Consider the loop in our algorithm for different customer types for \underline{\it fixed} time period~$t$. Policy $\Mrm_{jt}(\mu,S)$ offers the same assortment as policy $\mu$ at all time periods other than time period $t$, so the expectation and probability in (\ref{eqn:rev_dec}) for policy $\Mrm_{jt}(\mu,S)$ are the same as those for policy $\mu$. Thus, considering the loop in our algorithm for fixed time period $t$, we compute the expectation \mbox{$G_{it}^-(\mu) =  \mathbb E\{ \min \{ c_i , \sum_{k \in \Tcal \setminus \{t\}} \Brm_{ik}^\mu \}\}$} and probability $H_{it}^-(\mu) = \mathbb P \{\sum_{k \in \Tcal \setminus \{t\}} \Brm_{ik}^\mu \leq c_i - 1 \}$ once for all $i \in \Ncal$. We use the dynamic program in (\ref{eqn:sales_dp}) to compute the expectation $G_{it}^-(\mu)$ for all $i \in \Ncal$ in $O((m n+T)\ts nT)$ operations. We simply skip time period $t$ in the dynamic program. To compute $H_{it}^-(\mu)$, we use $\mathbb P \{\sum_{k \in \Tcal \setminus \{t\}} \Brm_{ik}^\mu \geq c_i \} = \mathbb E\{ \min \{ c_i , \sum_{k \in \Tcal \setminus \{t\}} \Brm_{ik}^\mu \}\} - \mathbb E\{ \min \{ c_i -1, \sum_{k \in \Tcal \setminus \{t\}} \Brm_{ik}^\mu \}\}$, along with the dynamic program for computing $G_{it}^-(\mu)$. For each $j$ and $S \in \Omega_{jt}^\mu$, considering policy $\eta = \Mrm_{jt}(\mu,S)$, policy $\eta$ offers the  assortment $S$ to a customer of type $j$, so we compute $\varphi_{it}^\eta$ as $\varphi_{it}^\eta = \varphi_{it}^\mu - \sum_{Q \in \Omega_{jt}^\mu} \mathbb P \{ \Srm_{jt}^\mu = Q \} \ts \phi_{ij}(Q) + \phi_{ij}(S)$ in $O(n)$ operations.  Once we have $\varphi_{it}^\eta$, we use (\ref{eqn:rev_dec}) along with $G_{it}^-(\mu)$ and $H_{it}^-(\mu)$, to compute $\rev(\eta)$ in $O(n)$ operations. Thus, for fixed time period~$t$ and customer type $j$, we compute $\rev(\Mrm_{jt}(\mu,S))$ for all $S \in \Omega_{jt}^\mu$ in $O(n^2)$ operations, so we can execute the loop in our algorithm for fixed time period $t$ in $O((m n+T)\ts nT)$ operations. 



{\bf \underline{Considering Assortments Beyond the Support}:}
\\
\indent By Proposition \ref{pro:derand}, we have  \mbox{$\rev(\mu)  \leq\max_{S \in \Omega_{jt}^\mu} \rev(\Mrm_{jt}(\mu,S)) \leq \max_{S \subseteq \Ncal} \rev(\Mrm_{jt}(\mu,S))$}, so by considering an assortment $S$ beyond the support $\Omega_{jt}^\mu$, policy $\Mrm_{jt}(\mu,S)$ can potentially improve the total expected revenue of the policy obtained by focusing only on the assortments in the support $\Omega_{jt}^\mu$. Because $|\Omega_{jt}^\mu| = O(n)$, we can solve the problem $\max_{S \in \Omega_{jt}^\mu} \rev(\Mrm_{jt}(\mu,S))$ efficiently by checking the total expected revenue $\rev(\Mrm_{jt}(\mu,S))$ for each $S \in  \Omega_{jt}^\mu$. We show that we can also solve the problem $\max_{S \subseteq \Ncal} \rev(\Mrm_{jt}(\mu,S))$ efficiently as long as we can solve the so-called static assortment optimization problem under the choice model that governs the choices of the customers of each type efficiently. The characterization of the total expected revenue of a policy given in (\ref{eqn:rev_dec}) becomes critical for this purpose as well. Recall that the total expected revenue of policy $\mu$ is given by $\sum_{i \in \Ncal} r_i \ts G_i(\mu)$, where $G_i(\mu) = \mathbb E\{ \min\{ c_i , \sum_{t \in \Tcal} \Brm_{it}^\mu\}\}$ and $G_i(\mu)$ is equivalently given by (\ref{eqn:rev_dec}). We use $G_{it}^-(\mu)$ and $H_{it}^-(\mu)$ to denote, respectively, the expectation and probability in (\ref{eqn:rev_dec}). Policy $\Mrm_{jt}(\mu,S)$ offers the same assortment as policy $\mu$ at all time periods other than time period $t$, which implies that we have $G_{it}^-(\Mrm_{jt}(\mu,S)) = G_{it}^-(\mu)$ and $H_{it}^-(\Mrm_{jt}(\mu,S)) = H_{it}^-(\mu)$. In this case, noting that we defined \mbox{$\varphi_{it}^\mu = \sum_{j \in \Jcal}  \lambda_{jt} \sum_{Q \in \Omega_{jt}^\mu} \mathbb P \{ \Srm_{jt}^\mu = Q \} \ts \phi_{ij}(Q)$}, because policy $\Mrm_{jt}(\mu,S)$ offers the deterministic assortment $S$ to a customer of type $j$ at time period~$t$, using the expression in (\ref{eqn:rev_dec}) for policy $\Mrm_{jt}(\mu,S)$, we can compute $G_i(\Mrm_{jt}(\mu,S))$ as 
\begin{align*}
G_i(\Mrm_{jt}(\mu,S)) ~=~ G_{it}^-(\mu) +  \Bigg( \lambda_{jt} \ts \phi_{ij}(S) ~+ \!\!\! \sum_{k \in \Jcal \setminus \{j\}} \!\!\!\! \lambda_{kt} \!\! \sum_{Q \in \Omega_{kt}^\mu} \mathbb P \{ \Srm_{kt}^\mu = Q \} \ts \phi_{ik}(Q) \Bigg) \ts H_{it}^-(\mu),
\end{align*}
where we use $G_{it}^-(\Mrm_{jt}(\mu,S)) = G_{it}^-(\mu)$ and $H_{it}^-(\Mrm_{jt}(\mu,S)) = H_{it}^-(\mu)$, as well as the fact that policies $\Mrm_{jt}(\mu,S)$ and $\mu$ offer the same assortments to customers of type other than $j$. The total expected revenue of policy $\Mrm_{jt}(\mu,S)$ is $\rev(\Mrm_{jt}(\mu,S)) = \sum_{i \in \Ncal} r_i \ts G_i(\Mrm_{jt}(\mu,S))$, so multiplying the expression above with $r_i$ and adding over all $i \in \Ncal$, the total expected revenue of policy $\Mrm_{jt}(\mu,S)$ depends on the assortment $S$ only through $\lambda_{jt} \ts \sum_{i \in \Ncal} r_i \ts H_{it}^-(\mu) \ts \phi_{ij}(S)$. Thus, to obtain an optimal solution to the problem $\max_{S \subseteq \Ncal} \rev(\Mrm_{jt}(\mu,S))$, we can solve the problem $\max_{S \subseteq \Ncal} \sum_{i \in \Ncal} r_i \ts H_{it}^-(\mu) \ts \phi_{ij}(S)$. The last problem is the static assortment optimization problem under the choice model that governs the choices of customers of type $j$. In this problem, if we offer the assortment $S$ to a customer of type $j$, then the customer chooses product $i$ with probability $\phi_{ij}(S)$, resulting in an \underline{\it adjusted} revenue of $r_i \ts H_{it}^-(\mu)$. There are efficient algorithms for the static assortment optimization problem under various choice models, such as the multinomial, nested logit and Markov chain choice models; see \cite{TaRy04}, \cite{DaGa11} and \cite{BlGa13}. 
De-randomizing using assortments beyond the support of the original policy is guaranteed to perform at least as well as the original policy, but it is not guaranteed to outperform focusing only on the support. 

\vspace{-1mm}

\section{Performance Guarantees for Locally-Optimal Policies}
\label{sec:local_opt}

\vspace{-1mm}

In the previous section, our understanding is that we have a sampling-based inventory-agnostic policy with a certain performance guarantee and we want to find a policy that offers a deterministic sequence of assortments without degrading the original sampling-based inventory-agnostic policy. We turn our attention to a more ambitious question, where we start with an inventory-agnostic policy that offers a deterministic sequence of assortments, but the policy does not necessarily have a performance guarantee. We carry out local exchanges on the assortments offered by the policy by replacing the assortment offered to one customer type at one time period with an assortment to improve the total expected revenue of the policy. If no local exchanges improve the total expected revenue of the policy, then we say that we have a locally-optimal policy. Even when the policy that we start with has no performance guarantee, we give a performance guarantee for the \mbox{locally-optimal} policy. The performance guarantee holds for any locally-optimal policy that we obtain. Considering inventory-agnostic policy $\mu$ that offers a deterministic sequence of assortments, we say that this policy is \underline{\it locally-optimal with threshold $\delta$} if  the total expected revenue of the policy satisfies $\rev(\Mrm_{jt}(\mu,S)) \leq \rev(\mu) + \delta$ for all $j \in \Jcal$, $t \in \Tcal$ and $S \subseteq \Ncal$. In other words, if we cannot improve the total expected revenue of a policy by more than $\delta$ by exchanging the assortment offered by the policy to some customer type at some time period, then we say that the  policy is \mbox{locally-optimal} with threshold $\delta$.  In the next theorem, we give our main result for a locally-optimal policy with threshold $\delta$, where we give a performance guarantee for such a policy. We devote the second half of this section to the proof of the theorem. In the next theorem and throughout the rest of this section, we use $\opt^\text{\sf det}$ to denote the total expected revenue of the best inventory-agnostic policy that offers a deterministic sequence of assortments. Because we can obtain an \mbox{inventory-agnostic} policy that offers a deterministic sequence of assortments without degrading the total expected revenue of any sampling-based inventory-agnostic policy, $\opt^\text{\sf det}$ is also  the total expected revenue of the best sampling-based inventory-agnostic policy. 

\vspace{-1mm}

\begin{thm}[Performance of Any Locally-Optimal Policy]
\label{thm:loc_opt}
If policy $\mu$ is locally-optimal with threshold $\delta$, then we have $\rev(\mu) \geq \frac12 \opt^\text{\sf det} - \delta \frac{mT}{2}$. In particular, any locally-optimal policy with threshold zero is a $\frac 12$-approximate policy with respect to $\opt^\text{\sf det}$.

\end{thm}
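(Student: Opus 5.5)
We follow the route outlined in the introduction: recast the problem as maximizing a monotone DR-submodular function of purchase probabilities, then combine local optimality with the first-order characterization of a stationary point. Concretely, let $x = (x_{ijt} : i \in \Ncal,~j\in\Jcal,~t\in\Tcal)$ with $x_{ijt}$ the probability that a type-$j$ customer at period $t$ purchases product $i$. Let $P_j$ be the convex hull of $\{\Phi_{jt}(S) : S \subseteq \Ncal\}$, and take the domain to be the product polytope $\Pcal = \prod_{j,t} P_j$. Define
\[
F(x) ~=~ \sum_{i\in\Ncal} r_i \ts \mathbb E\Big\{ \min\Big\{c_i, \sum_{t\in\Tcal} \Brm_{it}\Big\}\Big\},
\]
where $\Brm_{it}$ are independent Bernoulli with parameter $\varphi_{it}=\sum_j \lambda_{jt} x_{ijt}$. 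Every sampling-based policy induces a point of $\Pcal$ with revenue $F(x)$, and conversely. Hence $\max_{x\in\Pcal}F(x)=\opt^\text{\sf det}$, using the paper's remark that deterministic and sampling-based optima coincide.

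\emph{Step 1: structure of $F$.} By (\ref{eqn:rev_dec}), $F$ is affine in each $\varphi_{it}$ separately, with
\[
\partial F/\partial x_{ijt} ~=~ \lambda_{jt} \ts r_i \ts H_{it}^-(x) \geq 0,
\]
so $F$ is monotone. Let $N_{it}^-$ denote the number of units of product $i$ sold in periods other than $t$. Then $H_{it}^-=\mathbb P\{N_{it}^- \le c_i-1\}$, and this quantity is nonincreasing in every other $\varphi_{is}$, since $N_{it}^-$ increases stochastically in $\varphi_{is}$. Cross-product derivatives are zero because products decouple. Thus all mixed second derivatives are $\le 0$ and diagonal ones are $0$, which makes $F$ continuous DR-submodular on the box $[0,1]^{nmT}$. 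The feasible set $\Pcal$ is down-closed in the relevant sense because the choice model satisfies substitutability: removing items lets probabilities of remaining products only grow. We will not actually need down-closedness, only convexity.

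\emph{Step 2: stationary points are half-optimal.} For monotone DR-submodular $F$ and any $x,y$ in the domain, the standard Hassani--Soltanolkotabi--Karbasi inequality holds:
\[
\langle \nabla F(x), y-x\rangle \ge F(x\vee y)+F(x\wedge y)-2F(x) \ge F(y)-2F(x).
\]
It follows from concavity along nonnegative directions together with monotonicity. Taking $y=x^*$ optimal gives
\[
F(x)\ge \tfrac12\opt^\text{\sf det}-\tfrac12\max_{y\in\Pcal}\langle\nabla F(x),y-x\rangle.
\]
We will prove or cite this as from \cite{HaSo17}.

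\emph{Step 3: local optimality gives approximate stationarity.} Let $x^\mu$ be the point of the deterministic policy $\mu$. Because $F$ is affine in the block $x_{\cdot jt}$, which enters only through the $\varphi_{it}$ for $t$ fixed, we have exactly
\[
\rev(\Mrm_{jt}(\mu,S))-\rev(\mu) ~=~ \langle \nabla_{jt}F(x^\mu), \Phi_{jt}(S)-x^\mu_{\cdot jt}\rangle .
\]
This is the computation already done in the "beyond the support" paragraph. Local optimality with threshold $\delta$ therefore says that this inner product is at most $\delta$ for every vertex $\Phi_{jt}(S)$, hence for all of $P_j$ by linearity. Summing over the $mT$ blocks of the separable polytope gives
\[
\max_{y\in\Pcal}\langle\nabla F(x^\mu),y-x^\mu\rangle\le mT\delta .
\]
Plugging this into Step 2 yields $\rev(\mu)\ge \frac12\opt^\text{\sf det}-\delta\frac{mT}{2}$.

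The main obstacle is Step 1, the DR-submodularity. It requires carefully showing that $H_{it}^-$ is nonincreasing in each $\varphi_{is}$, which follows by applying (\ref{eqn:rev_dec}) once more to isolate period $s$:
\[
\partial H_{it}^-/\partial\varphi_{is}=-\mathbb P\{N_{i,\{t,s\}}^- = c_i-1\}\le 0 .
\]
Here $N_{i,\{t,s\}}^-$ is the number sold in periods other than $t$ and $s$. A secondary point is to verify that the HaSo17 inequality needs only $x,y$ in the box, not that $x\vee y\in\Pcal$. Since $F$ is defined and monotone on the whole box $[0,1]^{nmT}$, the bound $F(x\vee y)\ge F(y)$ is legitimate even if $x\vee y\notin\Pcal$. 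We must ensure the $\varphi$ values stay at most $1$, which we handle by extending $F$ via truncated Bernoulli parameters or noting $\sum_i x_{ijt}\le 1$ coordinatewise suffices after capping.
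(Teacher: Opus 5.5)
Your proposal is correct and follows essentially the paper's route: continuous DR-submodularity of the revenue as a function of purchase probabilities, the Hassani et al.\ inequality $F(y)-2F(x)\le\langle\nabla F(x),y-x\rangle$, and exact linearity in each period's block, which turns local optimality into $mT\delta$-stationarity. The differences are cosmetic: you disaggregate by customer type, so the feasible set is a product of $mT$ blocks rather than $\Pcal_1\times\cdots\times\Pcal_T$ with explicit convex-combination weights, and you check DR-submodularity through the signs of second derivatives instead of the paper's coupling argument. Your closing worry about keeping $\varphi_{it}\le 1$ is moot, because on the box $\varphi_{it}=\sum_{j}\lambda_{jt}x_{ijt}\le\sum_j\lambda_{jt}=1$ holds automatically.
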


\vspace{-1mm}

Below is a corollary to the theorem above that allows us to construct an algorithm to compute a locally-optimal policy with a performance guarantee in polynomial time by starting from an arbitrary policy. We discuss such an algorithm right after the corollary. 

\vspace{-1mm}

\begin{corollary} [Half-Approximation] For some $\epsilon > 0$, if an inventory-agnostic policy $\mu$ that offers a  deterministic sequence of assortments satisfies
$
\rev(\Mrm_{jt}(\mu,S)) \ts \leq \ts (1 + \epsilon \ts \frac{4}{mT}) \ts \rev(\mu)
$
for all $j \in \Jcal$, $t \in \Tcal$ and $S \subseteq \Ncal$, then we have $\rev(\mu) \geq (\frac12 - \epsilon) \ts \opt^\text{\sf det}$.

\end{corollary}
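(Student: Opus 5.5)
The corollary follows by plugging a multiplicative threshold into Theorem \ref{thm:loc_opt}. Take the threshold $\delta = \epsilon \frac{4}{mT}\rev(\mu)$. The hypothesis $\rev(\Mrm_{jt}(\mu,S)) \leq (1+\epsilon\frac{4}{mT})\rev(\mu)$ can then be rewritten as $\rev(\Mrm_{jt}(\mu,S)) \leq \rev(\mu) + \delta$ for all $j \in \Jcal$, $t\in\Tcal$ and $S\subseteq\Ncal$. So $\mu$ is locally-optimal with threshold $\delta$, and $\delta \ge 0$ because revenues are nonnegative.

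Applying Theorem \ref{thm:loc_opt} gives
\begin{align*}
\rev(\mu) \;\geq\; \tfrac12 \opt^\text{\sf det} - \delta\,\tfrac{mT}{2} \;=\; \tfrac12\opt^\text{\sf det} - 2\epsilon\,\rev(\mu).
\end{align*}
Rearranging yields $(1+2\epsilon)\rev(\mu) \geq \frac12 \opt^\text{\sf det}$, so $\rev(\mu) \geq \frac{1}{2(1+2\epsilon)}\opt^\text{\sf det}$.

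It remains to check that $\frac{1}{2(1+2\epsilon)} \geq \frac12 - \epsilon$. This is equivalent to $1 \geq (1-2\epsilon)(1+2\epsilon) = 1 - 4\epsilon^2$, which always holds. (If $\epsilon \geq \frac12$ the claim is trivial anyway, since the right-hand side is nonpositive and $\rev(\mu)\ge 0$.) This gives $\rev(\mu) \geq (\frac12-\epsilon)\opt^\text{\sf det}$.

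There is no real obstacle here. The only point needing care is that the threshold $\delta$ depends on $\rev(\mu)$ itself. This is legitimate, because Theorem \ref{thm:loc_opt} holds for any fixed nonnegative $\delta$ and $\mu$ is fixed. The resulting self-referential inequality is then solved for $\rev(\mu)$ as above.
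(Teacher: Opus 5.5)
Your proposal is correct and follows the paper's proof essentially verbatim. You set the threshold $\delta = \epsilon \frac{4}{mT}\rev(\mu)$, invoke Theorem~\ref{thm:loc_opt} to get $\rev(\mu) \geq \frac12 \opt^\text{\sf det} - 2\epsilon\,\rev(\mu)$, and rearrange using $\frac{1}{1+2\epsilon} \geq 1-2\epsilon$. Your explicit check via $(1-2\epsilon)(1+2\epsilon) = 1-4\epsilon^2 \leq 1$, and your remark that a threshold depending on $\rev(\mu)$ is legitimate, are small clarifications that the paper leaves implicit.
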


\vspace{-1mm}

\noindent{\it Proof.} By the assumption in the corollary, policy $\mu$ is locally-optimal with threshold $\epsilon \ts \frac{4}{mT}\ts \rev(\mu)$, in which case, by Theorem \ref{thm:loc_opt}, we have $\rev(\mu) \geq \frac 12 \ts \opt^\text{\sf det} - 2 \epsilon \ts \rev(\mu)$. Arranging the terms in the last inequality yields $\rev(\mu) \geq \frac{1}{2(1+2\epsilon)} \ts \opt^\text{\sf det} \geq \frac{1}{2} (1 - 2 \epsilon) \opt^\text{\sf det} = (\frac12 - \epsilon) \ts \opt^\text{\sf det}$. \qed

Consider a local exchange algorithm, where we start with an inventory-agnostic policy that offers a deterministic sequence of assortments. Given that the current policy is $\mu$, for each $j \in \Jcal$ and  $t \in \Tcal$, letting $S_{jt}^* = \arg\max_{S \subseteq \Ncal} \rev(\Mrm_{jt}(\mu,S))$, we check whether the total expected revenue of policy $\Mrm_{jt}(\mu,S_{jt}^*)$ improves that of policy $\mu$ by a factor of at least $1 + \epsilon \ts \frac{4}{mT}$. By the discussion at the end of the previous section, recall that we can solve the problem $\max_{S \subseteq \Ncal} \rev(\Mrm_{jt}(\mu,S))$ efficiently, as long as we can efficiently solve the corresponding static assortment optimization problem. For some $j \in \Jcal$ and $t \in \Tcal$, if we can find a policy $\Mrm_{jt}(\mu,S_{jt}^*)$ that improves the total expected revenue of policy $\mu$ by a factor of at least $1 + \epsilon \ts \frac{4}{mT}$, then we redefine policy $\mu$ as $\Mrm_{jt}(\mu,S_{jt}^*)$ and start over. The total expected revenue of the policy improves by a factor of $1 + \epsilon \ts \frac{4}{mT}$ at each update. Also, the total expected revenue of any policy is at most $\sum_{i \in \Ncal} r_i \ts c_i$. Thus, using $\Rbar$ to denote the total expected revenue on the initial policy,  the algorithm stops in $O(\frac{mT}{\epsilon} \log(\frac{\sum_{i \in \Ncal} r_i \ts c_i}{\Rbar}))$ updates. In the rest of this section, we turn our attention to giving a proof for Theorem \ref{thm:loc_opt}.

\vspace{1mm}

{\bf \underline{Proof of Theorem \ref{thm:loc_opt}\phantom{p}\!\!\!}:}
\\
\indent The outline of our analysis is as follows. Focusing on an inventory-agnostic policy that offers a deterministic sequence of assortments, we express the total expected revenue of the policy as a function of the expected demand for each product at each time period. We show that this function is differentiable, monotone and affine when viewed as a function of the expected demand for each product at each time period, as well as continuous DR-submodular. These properties allow us to use a  result for continuous DR-submodular functions, which shows that the approximately stationary points provide a $\frac 12 - \epsilon$ performance guarantee when we maximize these functions. Lastly, we show that if a policy is locally-optimal with threshold $\delta$, then the total expected revenue of this policy, when viewed as a function of the expected demand for each product at each time period, is an approximately stationary point. To execute this outline, fix an inventory-agnostic policy $\mu$ that offers the deterministic assortment $\Srm_{jt}^\mu$ to a customer of type $j$ at time period $t$. Under this policy, the demand for product $i$ at time period $t$ is Bernoulli with parameter $\varphi_{it}^\mu = \sum_{j \in \Jcal} \lambda_{jt} \ts \phi_{ij}(\Srm_{jt}^\mu)$. We use $\Brm_{it}(u)$ to denote a Bernoulli random variable with parameter $u$ with the understanding that the random variables $\{ B_{it}(u) : t \in \Tcal\}$ are independent of each other. For product $i$, we define
\begin{align}
F_i(u_1,\ldots,u_T) = \mathbb  E\Bigg\{ \min \Big\{ c_i , \sum_{t \in \Tcal} \Brm_{it}(u_t) \Big\}\Bigg\}.
\label{eqn:sales_prob}
\end{align}

\vspace{0.5mm}

Noting the argument just before the dynamic program in (\ref{eqn:sales_dp}), we can use the Bernoulli random variable $\Brm_{it}(\varphi_{it}^\mu)$ to capture the demand for product $i$ at time period $t$ under policy $\mu$, in which case,  the total expected revenue of policy~$\mu$ is given by $\sum_{i \in \Ncal} r_i \ts F_i(\varphi_{i1}^\mu,\ldots,\varphi_{iT}^\mu)$. 
In this case, to capture all possible sampling-based inventory-agnostic policies that we can use for a customer of a particular type at a particular time period, using the vector \mbox{$\alphavec =(\alpha(S) : S \subseteq \Ncal) \in \mathbb R_+^{2^n}$}, we define the unit simplex $\Delta = \{ \alphavec\in \mathbb R_+^{2^n} :  \sum_{S \subseteq \Ncal} \alpha(S) = 1 \}$. Therefore, using the vector $\pvec = (p_1,\ldots,p_n) \in \mathbb R_+^n$, the set of achievable purchase probabilities for the products at time period $t$ under any sampling-based inventory-agnostic policy are given by \mbox{$\Pcal_t = \{ \pvec \in \mathbb R_+^n: \exists \ts (\alphavec_1,\ldots,\alphavec_m) \in \Delta^m \mbox{ such that } p_i = \sum_{j \in \Jcal} \sum_{S \subseteq \Ncal} \lambda_{jt} \ts \phi_{ij}(S) \ts \alpha_j(S) ~ \forall \ts i \in \Ncal\}$}, where we use the fact that if we offer assortment $S$ to a customer of type $j$ at time period $t$ with probability $\alpha_j(S)$, then the purchase probability of product $i$ is $\sum_{j \in \Jcal} \sum_{S \subseteq \Ncal} \lambda_{jt} \ts \phi_{ij}(S) \ts \alpha_j(S)$. We argue that finding the best  \mbox{sampling-based} \mbox{inventory-agnostic} policy is equivalent to finding the best purchase probabilities for the products in $\Pcal_t$ for each time period $t$. In particular, consider any \mbox{sampling-based} inventory-agnostic policy $\mu$ that offers the assortment $S$ to a customer of type~$j$ at time period $t$ with probability $\mathbb P \{ \Srm_{jt}^\mu = S\}$. In this case, the purchase probability of product~$i$~at time period $t$ is  $\varphi_{it}^\mu = \sum_{j \in \Jcal}  \lambda_{jt} \sum_{S \in \Omega_{jt}^\mu} \mathbb P \{ \Srm_{jt}^\mu = S \} \ts \phi_{ij}(S)$. Therefore, setting \mbox{$\alpha_j(S) = \mathbb P \{ \Srm_{jt}^\mu = S\}$} for all $j \in \Jcal$ and $S \subseteq \Ncal$ in the definition of $\Pcal_t$, we have \mbox{$(\varphi_{1t}^\mu,\ldots,\varphi_{nt}^\mu) \in \Pcal_t$}. Conversely, consider any vector $\pvec \in \Pcal_t$. In this case, by the definition of $\Pcal_t$, there exists $(\alphavec_1,\ldots,\alphavec_m) \in \Delta^m$  such that \mbox{$p_i = \sum_{j \in \Jcal} \sum_{S \subseteq \Ncal} \lambda_{jt} \ts \phi_{ij}(S) \ts \alpha_j(S)$} for all $i \in \Ncal$. Therefore, if we use the sampling-based \mbox{inventory-agnostic} policy that offers assortment $S$ to a customer type $j$ at time period $t$ with probability $\alpha_j(S)$, then the purchase probability of product $i$ at time period $t$ under this policy is~$p_i$. By the preceding discussion, finding the best sampling-based inventory-agnostic policy is equivalent to finding the combination of best purchase probabilities for the products in $\Pcal_t$ for each time period~$t$.~In other words, using the vector $\pvec_t = (p_{1t},\ldots,p_{nt}) \in \mathbb R_+^n$, we have 
\begin{align}
\opt^\text{\sf det} ~~~~= \max_{(\pvec_1,\ldots,\pvec_T) \in \Pcal_1 \times \ldots \times \Pcal_T} \Bigg\{ \sum_{i \in \Ncal} r_i \ts F_i(p_{i1},\ldots,p_{iT}) \Bigg\}.
\label{eqn:opt_probs}
\end{align}

\vspace{-1mm}

{\it \underline{Monotone and Continuous DR-Submodular Functions\phantom{p}\!\!\!}.} Considering the box $\Xcal \subseteq \mathbb R_+^d$, the differentiable function $G : \Xcal \rightarrow \mathbb R_+$ is called continuous DR-submodular if \mbox{$\nabla G(\xvec) \leq \nabla G(\yvec)$} for all $\xvec, \yvec \in \Xcal$ with $\xvec \geq \yvec$, where we compare all vectors component-wise. On the other hand, the function $G : \Xcal \rightarrow \mathbb R_+$ is called monotone if $G(\xvec) \geq G(\yvec)$ for all $\xvec,\yvec \in \Xcal$ with $\xvec \geq \yvec$. Considering the~function $F_i : \mathbb [0,1]^T \rightarrow \mathbb R_+$~in (\ref{eqn:sales_prob}), we can use a coupling argument to show that $F_i$ is monotone. For $\uvec = (u_1,\ldots,u_T) \in [0,1]^T$ and $\wvec = (w_1,\ldots,w_T) \in [0,1]^T$ with $\uvec \geq \wvec$, using $U_t$ to denote a uniformly random variable over $[0,1]$ such that $U_1,\ldots,U_T$ are independent, we define the Bernoulli random variables $\Brm_{it}(u_t)$ and $\Brm_{it}(w_t)$ as $\Brm_{it}(u_t) = \ind{U_t \leq u_t}$ and $\Brm_{it}(w_t) = \ind{U_t \leq w_t}$, so we have $\Brm_{it}(u_t) \geq \Brm_{it}(w_t)$ almost surely. In this case, because $\min\{ c_i,x\}$ is increasing in $x$, $F_i$ is monotone. In the next lemma, we show that $F_i$ is also differentiable and continuous DR-submodular.

\vspace{-2mm}

\begin{lem} [Continuous DR-Submodularity]
\label{lem:drsubm}

The function $F_i : \mathbb [0,1]^T \rightarrow \mathbb R_+$ is differentiable and continuous DR-submodular. Furthermore, $F_i(u_1,\ldots,u_T)$ is linear in each $u_t$ for all $t \in \Tcal$. 

\end{lem}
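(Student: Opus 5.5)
We will use the decomposition in (\ref{eqn:rev_dec}), now applied to $F_i$ with independent Bernoulli variables $\Brm_{ik}(u_k)$. Fix a period $t$ and write $X_{-t} = \sum_{k \in \Tcal \setminus \{t\}} \Brm_{ik}(u_k)$. By the identity preceding (\ref{eqn:rev_dec}),
\begin{align*}
F_i(u_1,\ldots,u_T) = \mathbb E\{ \min\{ c_i, X_{-t}\}\} + u_t \ts \mathbb P\{ X_{-t} \leq c_i - 1\}.
\end{align*}
The distribution of $X_{-t}$ does not involve $u_t$, so this shows that $F_i$ is affine in $u_t$ once the other coordinates are fixed. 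It also gives $\partial F_i / \partial u_t = \mathbb P\{X_{-t} \leq c_i - 1\}$.

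For differentiability, we will note that $F_i$ is a polynomial in $(u_1,\ldots,u_T)$. Expanding the expectation over all $2^T$ outcomes of the Bernoulli variables writes $F_i$ as a sum of the terms $\min\{c_i,|A|\} \prod_{k \in A} u_k \prod_{k \notin A}(1-u_k)$ over subsets $A \subseteq \Tcal$. Hence $F_i$ is smooth on $[0,1]^T$, taking one-sided derivatives at the boundary. Since $F_i$ is multilinear, its partial derivatives agree with the formula above.

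For continuous DR-submodularity, we must show that $\mathbb P\{X_{-t}(\uvec) \leq c_i - 1\} \leq \mathbb P\{X_{-t}(\wvec) \leq c_i - 1\}$ whenever $\uvec \geq \wvec$. We will use the same coupling $\Brm_{ik}(u_k) = \ind{U_k \leq u_k}$ that the paper uses for monotonicity. Under this coupling, $X_{-t}(\uvec) \geq X_{-t}(\wvec)$ almost surely. The event $\{X_{-t}(\uvec) \leq c_i-1\}$ is therefore contained in $\{X_{-t}(\wvec) \leq c_i - 1\}$, which gives $\nabla F_i(\uvec) \leq \nabla F_i(\wvec)$ componentwise, as required.

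The main subtlety is justifying that the derivative is exactly the probability term. This means checking that the expectation and probability in the decomposition are genuinely independent of $u_t$, which follows from the independence of $\{\Brm_{ik}(u_k)\}$ across $k$. Once that is settled, the remaining steps are routine.
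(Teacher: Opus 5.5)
Your proof is correct and follows the paper's argument: the same decomposition from the identity behind (\ref{eqn:rev_dec}) gives affinity in $u_t$ and $\partial F_i/\partial u_t = \mathbb P\{X_{-t} \leq c_i - 1\}$, and the same uniform coupling shows this probability is nonincreasing in $\uvec$. Your multilinear expansion of $F_i$ over subsets $A \subseteq \Tcal$ is a small but useful addition, since it makes joint differentiability explicit, whereas the paper only writes down the partial derivatives.
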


\vspace{-2mm}

\noindent{\it Proof.} In (\ref{eqn:rev_dec}), the random variable $\Brm_{it}^\mu$ is Bernoulli with parameter $\varphi_{it}^\mu$. Using the same argument used to obtain (\ref{eqn:rev_dec}), we can express $F_i$ in (\ref{eqn:sales_prob}) equivalently as 
\begin{align}
F_i(u_1,\ldots,u_T) = \mathbb E\Bigg\{ \min \Big\{ c_i ,\!\!\! \sum_{k \in \Tcal \setminus \{t\}} \Brm_{ik}(u_k) \Big\}\Bigg\} + u_t \ts\ts \mathbb P \Bigg\{\sum_{k \in \Tcal \setminus \{t\}} \Brm_{ik}(u_k) \leq c_i - 1 \Bigg\}.
\label{eqn:rev_dec_probs}
\end{align}
Thus, we have $\frac{\partial F_i(u_1,\ldots,u_T)}{\partial u_t} = \mathbb P \{\sum_{k \in \Tcal \setminus \{t\}} \Brm_{ik}(u_k) \leq c_i - 1 \}$. By the coupling argument just before the lemma, for $\uvec = (u_1,\ldots,u_T) \in [0,1]^T$ and \mbox{$\wvec = (w_1,\ldots,w_T) \in [0,1]^T$} with $\uvec \geq \wvec$, we can define the Bernoulli random variables $\Brm_{ik}(u_k)$ and $\Brm_{ik}(w_k)$ such that \mbox{$\Brm_{ik}(u_k) \geq \Brm_{ik}(w_k)$} almost surely, so \mbox{$\ind{\sum_{k \in \Tcal \setminus\{t\}} \Brm_{ik}(u_k) \leq c_i -1} \leq \ind{\sum_{k \in \Tcal \setminus\{t\}} \Brm_{ik}(w_k) \leq c_i -1}$} almost surely. Taking expectations in the last inequality, we get \mbox{$\mathbb P \{\sum_{k \in \Tcal \setminus \{t\}} \Brm_{ik}(u_k) \leq c_i - 1 \} \leq \mathbb P \{\sum_{k \in \Tcal \setminus \{t\}} \Brm_{ik}(w_k) \leq c_i - 1 \}$}, so it follows that we have $\nabla F_i(u_1,\ldots,u_T) \leq \nabla F_i(w_1,\ldots,w_T)$ as well whenever $\uvec \geq \wvec$. Thus, $F_i$ is differentiable and continuous DR-submodular. By (\ref{eqn:rev_dec_probs}), $F_i(u_1,\ldots,u_T)$ is linear in $u_t$. \qed

{\it \underline{Approximate Stationarity and Performance Guarantee}.} By the discussion just before (\ref{eqn:opt_probs}), using the vector \mbox{$\pvec_t = (p_{1t},\ldots,p_{nt}) \in [0,1]^n$}, if we have $(\pvec_1,\ldots,\pvec_T) \in \Pcal_1 \times \ldots \times \Pcal_T$, then there exists a sampling-based inventory-agnostic policy such that the demand probability of product $i$ at time period $t$ is given by $p_{it}$ for all $i \in \Ncal$ and $t \in \Tcal$. Conversely, if $p_{it}$ is the demand probability of product $i$ at time period $t$ for all $i \in \Ncal$ and $t \in \Tcal$ under a sampling-based inventory-agnostic policy, then we have $(\pvec_1,\ldots,\pvec_T) \in \Pcal_1 \times \ldots \times \Pcal_T$. Letting $R(\pvec_1,\ldots,\pvec_T) = \sum_{i \in \Ncal} r_i \ts  F_i(p_{i1},\ldots,p_{iT})$, we use $R(\pvec_1,\ldots,\pvec_T)$ to capture the total expected revenue when we use a policy with a demand probability of $p_{it}$ for product $i$ at time period $t$ for all $i \in \Ncal$ and $t \in \Tcal$. Because $F_i: \mathbb [0,1]^T \rightarrow \mathbb R_+$ is monotone and continuous DR-submodular for all $i \in \Ncal$, $R : [0,1]^{n \times T} \rightarrow \mathbb R_+$ is also monotone and continuous DR-submodular. We use $\nabla_t R(\pvec_1,\ldots,\pvec_T)$ to denote the gradient block of $R(\pvec_1,\ldots,\pvec_T)$ for time period $t$, so $\nabla_t R(\pvec_1,\ldots,\pvec_T) = (\frac{\partial R(\pvec_1,\ldots,\pvec_T)}{\partial p_{it}} : i \in \Ncal)$. A point $(\pvechat_1,\ldots,\pvechat_T) \in \Pcal_1 \times \ldots \times \Pcal_T$ is called an \underline{\it $\epsilon$-stationary point} with respect to  $\Pcal_1 \times \ldots \times \Pcal_T$ if $\sum_{t \in \Tcal} \langle \nabla_t R(\pvechat_1,\ldots,\pvechat_T) , \qvec_t - \pvechat_t \rangle \leq \epsilon$  for all $(\qvec_1,\ldots,\qvec_T) \in \Pcal_1 \times \ldots \times \Pcal_T$. This definition of an $\epsilon$-stationary point follows Hassani et al. (2017). In the next lemma, we give a useful property of monotone and continuous DR-submodular functions. This property corresponds to Inequality (7.2) in \cite{HaSo17}.

\vspace{-1mm}

\begin{lem}[Implication of Continuous DR-Submodularity] 
\label{lem:var_drsubm}
For the box $\Xcal \subseteq \mathbb R_+^d$, letting $G: \Xcal \rightarrow \mathbb R_+$ be monotone and continuous DR-submodular, for all $\xvec ,\yvec \in \Xcal$, we have 
\begin{align*}
G(\yvec) - 2 \ts G(\xvec) ~\leq~ \langle \nabla G(\xvec) , \yvec - \xvec \rangle.
\end{align*}
\end{lem}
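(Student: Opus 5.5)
The plan is to split $\yvec - \xvec$ into a part that dominates $\xvec$ and a part that is dominated by $\xvec$, and to bound the directional change along each part. We introduce the coordinate-wise maximum and minimum $\xvec \vee \yvec$ and $\xvec \wedge \yvec$. Both lie in the box $\Xcal$, since a box is closed under these operations. We then use the identity
\begin{align*}
\yvec - \xvec = (\xvec \vee \yvec - \xvec) + (\xvec \wedge \yvec - \xvec),
\end{align*}
where the first vector is nonnegative and the second is nonpositive. The goal is to show two bounds:
\begin{align*}
\langle \nabla G(\xvec), \xvec \vee \yvec - \xvec\rangle \geq G(\xvec\vee\yvec) - G(\xvec), \qquad
\langle \nabla G(\xvec), \xvec \wedge \yvec - \xvec\rangle \geq G(\xvec\wedge\yvec) - G(\xvec).
\end{align*}
Adding them gives
\begin{align*}
\langle \nabla G(\xvec), \yvec - \xvec\rangle \geq G(\xvec\vee\yvec) + G(\xvec\wedge\yvec) - 2G(\xvec).
\end{align*}
Monotonicity then gives $G(\xvec\vee\yvec) \geq G(\yvec)$, because $\xvec \vee \yvec \geq \yvec$. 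It also gives $G(\xvec\wedge\yvec) \geq 0$, because $G$ maps into $\mathbb R_+$. Together these yield the claim.

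Each of the two bounds comes from the fundamental theorem of calculus along a segment. For the first, let $\zvec(s) = \xvec + s\,\vvec$ with $\vvec = \xvec\vee\yvec - \xvec \geq 0$ and $s \in [0,1]$. The segment stays in $\Xcal$ by convexity of the box. This gives
\begin{align*}
G(\xvec\vee\yvec) - G(\xvec) = \int_0^1 \langle \nabla G(\zvec(s)), \vvec\rangle\, ds.
\end{align*}
Since $\zvec(s) \geq \xvec$, continuous DR-submodularity gives $\nabla G(\zvec(s)) \leq \nabla G(\xvec)$ component-wise. Because $\vvec \geq 0$, the integrand is at most $\langle \nabla G(\xvec), \vvec\rangle$.

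For the second bound, let $\wvec = \xvec - \xvec\wedge\yvec \geq 0$ and parametrize $\zvec(s) = \xvec - s\,\wvec \leq \xvec$. This gives
\begin{align*}
G(\xvec) - G(\xvec\wedge\yvec) = \int_0^1 \langle \nabla G(\zvec(s)), \wvec\rangle\, ds \geq \langle \nabla G(\xvec), \wvec\rangle,
\end{align*}
because now $\nabla G(\zvec(s)) \geq \nabla G(\xvec)$ and $\wvec \geq 0$. Rearranging gives the second bound.

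The main point requiring care is the sign bookkeeping in the second bound. Along the downward direction the gradient ordering reverses, and this reversal is exactly what makes the inequality point the right way. We also need differentiability along segments so the integral representation is valid; this holds because $G$ is differentiable on the box, and for the functions $F_i$ of Lemma \ref{lem:drsubm} it is immediate since they are multilinear. No other earlier results are needed.
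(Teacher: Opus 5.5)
Your proof is correct and is essentially the argument behind Inequality (7.2) of \cite{HaSo17}, which the paper cites without giving a proof of its own. That argument decomposes $\yvec-\xvec=(\xvec\vee\yvec-\xvec)+(\xvec\wedge\yvec-\xvec)$, bounds each piece using the gradient ordering along the corresponding monotone segment, and finishes with monotonicity and nonnegativity of $G$. As a minor simplification, you could apply the mean value theorem to $G$ along each segment instead of integrating, which avoids any integrability question; the integral form is also fine here, since the integrand is monotone in the segment parameter.
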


\vspace{-1mm}

In the next lemma, we use the lemma above to show that any $\epsilon$-stationary point of $R$ with respect to  $\Pcal_1 \times \ldots \times \Pcal_T$ yields a half-approximate solution to (\ref{eqn:opt_probs}) up to an additive error of $O(\epsilon)$.

\vspace{-1mm}

\begin{lem}[Half-Approximation via Stationarity]
\label{lem:half_stat}
 If $(\pvechat_1,\ldots,\pvechat_T)$ is an \mbox{$\epsilon$-stationary} point of $R$ with respect to  $\Pcal_1 \times \ldots \times \Pcal_T$, then we have $R(\pvechat_1,\ldots,\pvechat_T) \geq \frac 12 \ts \opt^\text{\sf det} - \frac \epsilon 2$.

\end{lem}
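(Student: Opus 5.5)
The plan is to combine Lemma~\ref{lem:var_drsubm}, applied to $G=R$ on the box $\Xcal=[0,1]^{n\times T}$, with the definition of an $\epsilon$-stationary point, using as $\yvec$ an optimal solution of (\ref{eqn:opt_probs}).

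First, I will check that Lemma~\ref{lem:var_drsubm} applies. Each $F_i$ is monotone by the coupling argument and continuous DR-submodular by Lemma~\ref{lem:drsubm}. Since $r_i \geq 0$, the function $R=\sum_{i} r_i F_i(p_{i1},\ldots,p_{iT})$ is nonnegative and monotone on $[0,1]^{n\times T}$, and its gradient is antitone. The last point holds because $\partial R/\partial p_{it} = r_i\,\partial F_i/\partial u_t$ depends only on the $i$-th row, and each such partial derivative is nonincreasing in the arguments.

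Next, I need each $\Pcal_t$ to lie in $[0,1]^n$. This follows because $p_i \leq \sum_j \lambda_{jt}\sum_S \alpha_j(S)=1$.

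The set $\Pcal_1\times\cdots\times\Pcal_T$ is compact, being a linear image of the compact set $\Delta^{mT}$, and $R$ is continuous. So the maximum in (\ref{eqn:opt_probs}) is attained at some $(\qvec_1^*,\ldots,\qvec_T^*)$ with $R(\qvec^*)=\opt^\text{\sf det}$. One could also bypass attainment by taking near-optimal points and a limit.

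Now I apply Lemma~\ref{lem:var_drsubm} with $\xvec=\pvechat$ and $\yvec=\qvec^*$. This gives
\begin{align*}
\opt^\text{\sf det} - 2R(\pvechat_1,\ldots,\pvechat_T) ~\leq~ \sum_{t\in\Tcal}\langle \nabla_t R(\pvechat_1,\ldots,\pvechat_T), \qvec_t^*-\pvechat_t\rangle ~\leq~ \epsilon,
\end{align*}
where the inner product splits into the time-period blocks and the last inequality is exactly $\epsilon$-stationarity with $(\qvec_1^*,\ldots,\qvec_T^*)\in\Pcal_1\times\cdots\times\Pcal_T$. Rearranging yields $R(\pvechat)\geq \frac12\opt^\text{\sf det}-\frac\epsilon2$.

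The only real obstacle is the bookkeeping in the first step: confirming that $R$ meets the hypotheses of Lemma~\ref{lem:var_drsubm} on the whole box, not just on the product of polytopes, and that the feasible set sits inside that box. Both follow directly from the facts above.
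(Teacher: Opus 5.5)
Your proposal is correct and follows the paper's proof essentially verbatim. Like the paper, you take an optimal solution of (\ref{eqn:opt_probs}), apply Lemma~\ref{lem:var_drsubm} to $R$ with $\xvec=\pvechat$ and $\yvec$ equal to that optimizer, split the inner product into time-period blocks, and invoke $\epsilon$-stationarity. The extra bookkeeping you supply is all valid and is left implicit in the paper: that $\Pcal_t\subseteq[0,1]^n$, that the maximum is attained by compactness, and that the nonnegative weights $r_i$ preserve monotonicity and the antitone gradient.
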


\vspace{-1mm}

\noindent{\it Proof.} Let $(\pvec_1^*,\ldots,\pvec_T^*)$ be an optimal solution to problem (\ref{eqn:opt_probs}). By Lemma~\ref{lem:drsubm}, \mbox{$R: [0,1]^{n \times T} \rightarrow \mathbb R_+$} is monotone and continuous DR-submodular, so by~Lemma \ref{lem:var_drsubm}, we get $\opt^\text{\sf det} - 2 \ts R(\pvechat_1,\ldots,\pvechat_T) = R(\pvec_1^*,\ldots,\pvec_T^*) - 2 \ts R(\pvechat_1,\ldots,\pvechat_T) \leq \sum_{t \in \Tcal} \langle \nabla_t R(\pvechat_1,\ldots,\pvechat_T) , \pvec_t^* - \pvechat_t \rangle  \leq  \epsilon$, where the first inequality holds because the inner product is additive and the second inequality is by $\epsilon$-stationarity. The desired result follows by arranging the terms in the last chain of inequalities. \qed

In the next lemma, we relate local-optimality to $\epsilon$-stationarity. We let $\varphi_{it}^\mu$ be the demand probability of product $i$ at time period $t$ under policy $\mu$. We use the vector $\phivec_t^\mu = (\varphi_{1t}^\mu,\ldots,\varphi_{nt}^\mu)$.

\vspace{-1mm}

\begin{lem}[Stationarity via Local Optimality]
\label{lem:stat_opt}
If policy $\mu$ is locally-optimal with threshold $\delta$, then the point $(\phivec_1^\mu,\ldots,\phivec_T^\mu)$ is $\epsilon$-stationary for $R : [0,1]^{n \times T} \rightarrow \mathbb R_+$  with $\epsilon = mT\delta$.

\end{lem}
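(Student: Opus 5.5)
We need to show that for every $(\qvec_1,\ldots,\qvec_T)\in\Pcal_1\times\ldots\times\Pcal_T$ we have $\sum_{t\in\Tcal}\langle \nabla_t R(\phivec_1^\mu,\ldots,\phivec_T^\mu),\qvec_t-\phivec_t^\mu\rangle\le mT\delta$. The plan is to decompose the inner product by time period and customer type, and to show that each $(j,t)$ piece is exactly the revenue change of a single local exchange.

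\textbf{Step 1: the gradient as an adjusted revenue.} Write $\pvechat_t=\phivec_t^\mu$. By (\ref{eqn:rev_dec_probs}) in the proof of Lemma \ref{lem:drsubm}, the partial derivative is
\begin{align*}
g_{it} \;=\; \frac{\partial R}{\partial p_{it}}(\pvechat_1,\ldots,\pvechat_T) \;=\; r_i\,H_{it}^-(\mu),
\end{align*}
where $H_{it}^-(\mu)=\mathbb P\{\sum_{k\in\Tcal\setminus\{t\}}\Brm_{ik}(\varphi_{ik}^\mu)\le c_i-1\}$. By linearity of $F_i$ in each coordinate, changing only $p_{it}$ for fixed $t$ (all $i$) gives
\begin{align*}
R(\ldots,\qvec_t,\ldots)-R(\ldots,\pvechat_t,\ldots)=\langle \nabla_t R,\qvec_t-\pvechat_t\rangle .
\end{align*}
This is exact, because $H_{it}^-$ does not depend on the period-$t$ coordinates. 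This is the same computation used in the paper's discussion of assortments beyond the support.

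\textbf{Step 2: decomposition into local exchanges.} Fix $t$ and $\qvec_t\in\Pcal_t$, so that $q_{it}=\sum_j\lambda_{jt}\sum_S\alpha_j(S)\phi_{ij}(S)$ for some $(\alphavec_1,\ldots,\alphavec_m)\in\Delta^m$. Since $\mu$ is deterministic, $\varphi_{it}^\mu=\sum_j\lambda_{jt}\phi_{ij}(\Srm_{jt}^\mu)$. Hence
\begin{align*}
\langle\nabla_tR,\qvec_t-\pvechat_t\rangle=\sum_{j\in\Jcal}\sum_{S\subseteq\Ncal}\alpha_j(S)\,\lambda_{jt}\sum_{i\in\Ncal}g_{it}\big(\phi_{ij}(S)-\phi_{ij}(\Srm_{jt}^\mu)\big).
\end{align*}
Now consider the policy $\Mrm_{jt}(\mu,S)$. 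It changes only the type-$j$ assortment at period $t$, so its period-$t$ purchase vector is $\pvechat_t+\lambda_{jt}(\Phi_{jt}(S)-\Phi_{jt}(\Srm_{jt}^\mu))$, and its other periods are unchanged. By Step 1,
\begin{align*}
\rev(\Mrm_{jt}(\mu,S))-\rev(\mu)=\lambda_{jt}\sum_{i\in\Ncal}g_{it}\big(\phi_{ij}(S)-\phi_{ij}(\Srm_{jt}^\mu)\big).
\end{align*}
By local optimality with threshold $\delta$, this is at most $\delta$. Because $\sum_S\alpha_j(S)=1$, the $(j,t)$ term is a convex combination of such quantities and is therefore at most $\delta$.

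\textbf{Step 3: summing up.} Summing over $j\in\Jcal$ and $t\in\Tcal$ gives a total of at most $mT\delta$, which is $\epsilon$-stationarity with $\epsilon=mT\delta$.

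\textbf{Main obstacle.} The step needing care is the exactness claim in Step 1: that the revenue difference of a single-period change equals the gradient inner product with no second-order error. Within period $t$, different products $i$ have separate functions $F_i$, and each $F_i$ is affine in its own coordinate $u_t$. So $R$ is affine in the block $\pvec_t$, and the identity holds exactly.
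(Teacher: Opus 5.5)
Your proposal is correct and follows essentially the same route as the paper: you use that $R$ is affine in the block $\pvec_t$ to show that the revenue change of each single exchange $\Mrm_{jt}(\mu,S)$ equals a gradient inner product bounded by $\delta$, then average with the weights $\alpha_j(S)$ and sum over $j$ and $t$ to get $mT\delta$. Your explicit formula for the gradient as $r_i H_{it}^-(\mu)$ is just a more concrete statement of the linearity step that the paper takes from Lemma~\ref{lem:drsubm}.
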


\vspace{-2mm}

\noindent{\it Proof.} We use $\Srm_{jt}^\mu$ to denote the deterministic assortment offered by policy $\mu$ to a customer of type~$j$ at time period $t$. In this case, the demand probability for product $i$ at time period $t$ is given by \mbox{$\varphi_{it}^\mu = \sum_{j \in \Jcal} \lambda_{jt} \ts \phi_{ij}(\Srm_{jt}^\mu)$}. Using the notation $\qvechat_t = (\qhat_{1t},\ldots,\qhat_{nt})$, consider any \mbox{$(\qvechat_1,\ldots,\qvechat_T) \!\in\! \Pcal_1 \!\! \times \! \ldots \! \times\!\! \Pcal_T$}. Because $\qvechat_t \in \Pcal_t$, by the definition of $\Pcal_t$, there exists $\alphavechat_{jt} = (\alphahat_{jt}(S) : S \subseteq \Ncal) \in \Delta$ such that we have $\qhat_{it} = \sum_{j \in \Jcal} \lambda_{jt} \sum_{S \subseteq \Ncal} \alphahat_{jt}(S) \ts \phi_{ij}(S)$ for all $i \in \Ncal$. For all $t \in \Tcal$, if we can show that $\langle \nabla_t R(\phivec_1^\mu,\ldots,\phivec_T^\mu) , \qvechat_t - \phivec_t^\mu \rangle \leq m \delta$, then we get $\sum_{t \in \Tcal} \langle \nabla_t R(\phivec_1^\mu,\ldots,\phivec_T^\mu) , \qvechat_t - \phivec_t^\mu \rangle \leq mT \delta$, so the desired result follows. We turn our attention to showing that $\langle \nabla_t R(\phivec_1^\mu,\ldots,\phivec_T^\mu) , \qvechat_t - \phivec_t^\mu \rangle \leq m \delta$ for all $t \in \Tcal$.
Note that policy $\mu$ is locally-optimal with threshold $\delta$, so the total expected revenue of policy $\Mrm_{jt}(\mu,S)$ does not improve that of policy $\mu$ by more than $\delta$ for any $S \subseteq \Ncal$. 

Fix time period $t$, customer type $j$ and assortment $S$.  Total expected revenue of policy $\mu$ is $R(\phivec_1^\mu,\ldots,\phivec_T^\mu)$. Considering policy $\pi(j,t,S) = \Mrm_{jt}(\mu,S)$, because policy $\mu$ is locally-optimal, we have $R(\phivec_1^{\pi(j,t,S)},\ldots,\phivec_T^{\pi(j,t,S)})- R(\phivec_1^\mu,\ldots,\phivec_T^\mu) \leq \delta$.  The demand probability of product $i$ at time period $t$ under policy $\pi(j,t,S)$ is $\varphi_{it}^{\pi(j,t,S)} = \lambda_{jt} \ts \phi_{ij}(S) + \sum_{k \in \Jcal \setminus \{j\}} \lambda_{kt} \ts \phi_{ik}(\Srm_{kt}^\mu)$, where we use the fact that policies $\mu$ and $\pi(j,t,S)$ differ only in the assortment offered to customers of type $j$ at time period $t$.  Therefore, we have  $\varphi_{it}^{\pi(j,t,S)} - \varphi_{it}^\mu = \lambda_{jt} \ts (\phi_{ij}(S) - \phi_{ij}(\Srm_{jt}^\mu))$ for all $i \in \Ncal$. By Lemma \ref{lem:drsubm}, $F_i(p_{i1},\ldots, p_{iT})$ is linear in $p_{it}$, in which case, noting that $R(\pvec_1,\ldots,\pvec_T) = \sum_{i \in \Ncal} r_i \ts  F_i(p_{i1},\ldots,p_{iT})$, it follows that $R(\pvec_1,\ldots,\pvec_T)$ is linear in $\pvec_t$ as well. In this case, using the linearity of $R(\pvec_1,\ldots,\pvec_T)$ in $\pvec_t$, as well as the fact that policy $\mu$ is locally-optimal with threshold $\delta$, we obtain 
\begin{align}
\big \langle \nabla_t R(\phivec_1^\mu,\ldots,\phivec_T^\mu) , \phivec_t^{\pi(j,t,S)} - \phivec_t^\mu \big \rangle
~=~
R(\phivec_1^{\pi(j,t,S)},\ldots,\phivec_T^{\pi(j,t,S)})- R(\phivec_1^\mu,\ldots,\phivec_T^\mu) 
~\leq~
\delta.
\label{eqn:stat_rev}
\end{align} 
\indent Noting that $\varphi_{it}^{\pi(j,t,S)} - \varphi_{it}^\mu = \lambda_{jt} \ts (\phi_{ij}(S) - \phi_{ij}(\Srm_{jt}^\mu))$, using the vector $\Phi_j(S) = (\phi_{1j}(S),\ldots,\phi_{nj}(S))$,  multiplying (\ref{eqn:stat_rev}) by $\alphahat_{jt}(S)$ and adding over all $S \subseteq \Ncal$ and $j \in \Jcal$, we obtain
\begin{align*}
 m \ts \delta 
~&\stackrel{(a)}=~ 
\sum_{S \in \Ncal} \sum_{j \in \Jcal}  \alphahat_{jt}(S) \ts \delta 
~\stackrel{(b)}\geq~
\Big \langle \nabla_t R(\phivec_1^\mu,\ldots,\phivec_T^\mu) , \sum_{S \subseteq \Ncal} \sum_{j \in \Jcal} \alphahat_{jt}(S) \ts (\phivec_t^{\pi(j,t,S)} - \phivec_t^\mu) \Big \rangle
\\
&\stackrel{(c)}=~
\Big \langle \nabla_t R(\phivec_1^\mu,\ldots,\phivec_T^\mu) , \sum_{S \subseteq \Ncal} \sum_{j \in \Jcal} \alphahat_{jt}(S) \ts \lambda_{jt} \ts (\Phi_j(S) - \Phi_j(\Srm_{jt}^\mu)) \Big \rangle
~\stackrel{(d)}=~
\langle \nabla_t R(\phivec_1^\mu,\ldots,\phivec_T^\mu) , \qvechat_t - \phivec_t^\mu \rangle,
\end{align*}
where $(a)$ uses $\sum_{S \subseteq \Ncal} \alphahat_{jt}(S) = 1$, $(b)$ is by (\ref{eqn:stat_rev}), $(c)$ uses $\varphi_{it}^{\pi(j,t,S)} - \varphi_{it}^\mu = \lambda_{jt} \ts (\phi_{ij}(S) - \phi_{ij}(\Srm_{jt}^\mu))$ and $(d)$ holds because $\qhat_{it} = \sum_{j \in \Jcal} \lambda_{jt} \sum_{S \subseteq \Ncal} \alphahat_{jt}(S) \ts \phi_{ij}(S)$ and $\varphi_{it}^\mu = \sum_{j \in \Jcal} \lambda_{jt} \ts \phi_{ij}(\Srm_{jt}^\mu)$.  \qed

By Lemmas \ref{lem:half_stat} and \ref{lem:stat_opt}, a half-approximate policy is related to an approximate stationary point, which is, in turn, related to a locally-optimal policy. We give a proof of Theorem \ref{thm:loc_opt} next.

{\it \underline{Proof of Theorem \ref{thm:loc_opt}}.} By Lemma \ref{lem:stat_opt}, the point $(\phivec_1^\mu,\ldots,\phivec_T^\mu)$ is $mT\delta$-stationary, in which case, by Lemma~\ref{lem:half_stat}, the total expected revenue of policy $\mu$ is at least $\frac12 \opt^\text{\sf det} - \delta \frac{mT}{2}$. \qed

\vspace{-2mm}

\section{Lossless Inventory-Aware Execution}
\label{sec:inv_aware}

\vspace{-2.5mm}

A sampling-based inventory-agnostic policy may sample an assortment that includes products without remaining inventories. If the customer chooses a product without remaining inventories, then she leaves without a purchase. Offering products without remaining inventories may not be meaningful in practice. In this section, assuming that the customers choose according to  a choice model satisfying the substitutability property, we show that if we are given a sampling-based inventory-agnostic policy, then we can implement this policy in a way that never offers products without remaining inventories while we do not degrade the performance of the policy. In particular, all we need to do is to drop the products without remaining inventories and offer the remaining products. In this way, we obtain a \underline{\it sampling-based inventory-aware} policy that never offers products without remaining inventories. Given a sampling-based inventory-agnostic policy $\mu$, below is the description of the sampling-based inventory-aware counterpart of policy $\mu$.

\vspace{-0.5mm}

{\bf \underline{Sampling-Based Inventory-Aware Counterpart of a Policy}:}
\\
\indent Consider sampling-based inventory-agnostic policy $\mu$ that offers the random assortment $\Srm_{jt}^\mu$ to a customer of type $j$ at time period $t$. Assume that we are at time period $t$ and the remaining inventories of the products are given by $\xvec = (x_i : i \in \Ncal)$, where $x_i$ is the remaining inventory of product $i$. Letting $\Ncal(\xvec) = \{ i \in \Ncal : x_i > 0\}$ be the set of products with remaining inventories, if a customer of type $j$ arrives, then the sampling-based inventory-aware counterpart of policy $\mu$ samples assortment $S$ with probability $\mathbb P \{ \Srm_{jt}^\mu = S\}$ and offers the assortment $S \cap \Ncal(\xvec)$. 

\vspace{-0.5mm}

In the next proposition, we show that the total expected revenue of the sampling-based  inventory-aware counterpart of policy $\mu$ is at least as large as that of the total expected revenue of policy $\mu$. In the proof, we use the observation that if the choice model governing the choices of the customers satisfies the substitutability property, then we have the inequality $\phi_{ij}(S \cap \Ncal(\xvec))  \geq  \phi_{ij}(S) \ts \ind{x_i > 0}$ for all $i \in \Ncal$, $j \in \Jcal$, $S \subseteq \Ncal$ and $\xvec \in \mathbb Z_+^n$. In particular, using the fact that the choice probability of a product not included in the assortment is zero, if $i \not \in S$, then both sides of the inequality is zero, so the inequality holds. On the other hand, if $i \not \in \Ncal(\xvec)$, then we have $\ind{x_i > 0} = 0$ by the definition of $\Ncal(\xvec)$, so the right side of the inequality is zero and the left side of the inequality is non-negative, so the inequality holds. Lastly, if $i \in S \cap \Ncal(\xvec)$, then we have $\phi_{ij}(S \cap \Ncal(\xvec)) \geq \phi_{ij}(S)$ by the substitutability of the choice model.  Also, because $i \in S \cap \Ncal(\xvec)$, we have $\ind{x_i > 0} = 1$, in which case, we get $\phi_{ij}(S \cap \Ncal(\xvec)) \geq \phi_{ij}(S) =  \phi_{ij}(S) \ts \ind{x_i > 0} $, so the inequality holds.

\vspace{-1.5mm}

\begin{prop}[Inventory-Aware]
\label{pro:inv}
 Letting $\overline \rev(\mu)$ be the total expected revenue of the sampling-based  inventory-aware counterpart of policy $\mu$, we have $\overline \rev(\mu) \geq \rev(\mu)$. 
\end{prop}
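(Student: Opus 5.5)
The plan is to compare two dynamic programs over the full inventory vector and show by backward induction on $t$ that the value function of the inventory-aware counterpart dominates that of policy $\mu$ pointwise. Let $U_t(\xvec)$ be the total expected revenue from time period $t$ onward under the inventory-agnostic policy $\mu$, starting from inventories $\xvec$. Let $V_t(\xvec)$ be the same quantity for the inventory-aware counterpart. Both use the boundary conditions $U_{T+1} = V_{T+1} = 0$.

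\emph{Step 1: the recursions.} Taking expectations over the type $j$ (with probability $\lambda_{jt}$) and over $S$ (with probability $\mathbb P\{\Srm_{jt}^\mu = S\}$), the two value functions satisfy
\begin{align*}
U_t(\xvec) &= \mathbb E\Big\{ U_{t+1}(\xvec) + \sum_{i \in \Ncal} \phi_{ij}(S)\, \ind{x_i > 0}\, \big(r_i + U_{t+1}(\xvec - \evec_i) - U_{t+1}(\xvec)\big) \Big\},\\
V_t(\xvec) &= \mathbb E\Big\{ \sum_{i \in \Ncal} \phi_{ij}(S \cap \Ncal(\xvec))\, \big(r_i + V_{t+1}(\xvec - \evec_i)\big) + \Big(1 - \sum_{i \in \Ncal} \phi_{ij}(S \cap \Ncal(\xvec))\Big) V_{t+1}(\xvec) \Big\}.
\end{align*}
In the recursion for $U_t$, a choice of a product with $x_i = 0$ changes neither the revenue nor the state, which is why the indicator appears. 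Since $U_1(\cvec)=\rev(\mu)$ and $V_1(\cvec)=\overline\rev(\mu)$, it suffices to show $V_t \geq U_t$ for all $t$.

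\emph{Step 2: the induction.} Assume $V_{t+1} \geq U_{t+1}$. In the recursion for $V_t$, every coefficient multiplying a value of $V_{t+1}$ is nonnegative. We may therefore replace $V_{t+1}$ by $U_{t+1}$ and obtain a lower bound. Rearranging that bound gives
\begin{align*}
V_t(\xvec) \;\geq\; \mathbb E\Big\{ U_{t+1}(\xvec) + \sum_{i \in \Ncal} \phi_{ij}(S \cap \Ncal(\xvec))\, \big(r_i + U_{t+1}(\xvec - \evec_i) - U_{t+1}(\xvec)\big) \Big\}.
\end{align*}
The only terms that survive are those with $x_i > 0$, because $\phi_{ij}(S\cap\Ncal(\xvec))=0$ when $i\notin\Ncal(\xvec)$.

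\emph{Step 3: the key estimate.} For each product $i$ with $x_i > 0$, I need the bracket $r_i + U_{t+1}(\xvec - \evec_i) - U_{t+1}(\xvec)$ to be nonnegative. Granting this, the inequality $\phi_{ij}(S\cap\Ncal(\xvec)) \geq \phi_{ij}(S)\,\ind{x_i>0}$ established just before the proposition lets me lower the coefficients to $\phi_{ij}(S)\,\ind{x_i>0}$. The right side then becomes exactly the recursion for $U_t(\xvec)$, which closes the induction.

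The main obstacle is this nonnegativity of the bracket. It would be unclear for the inventory-aware value function $V$, and that is precisely why the induction is arranged to pass to $U_{t+1}$ first. For $U$ the estimate is easy because the inventory-agnostic revenue separates across products: by the same reasoning as for $G_i(\mu)$, we have $U_{t+1}(\xvec) = \sum_{i} r_i\, \mathbb E\{\min\{x_i, \sum_{k \geq t+1} \Brm_{ik}^\mu\}\}$. Hence
\begin{align*}
U_{t+1}(\xvec) - U_{t+1}(\xvec - \evec_i) = r_i\, \mathbb P\Big\{\sum_{k \geq t+1} \Brm_{ik}^\mu \geq x_i\Big\} \leq r_i,
\end{align*}
and the bracket is nonnegative. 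Taking $t=1$ and $\xvec=\cvec$, where $\cvec=(c_i : i \in \Ncal)$ is the vector of initial inventories, then yields $\overline\rev(\mu) \geq \rev(\mu)$.
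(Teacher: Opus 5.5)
Your proposal is correct and follows essentially the same argument as the paper. Both run a backward induction showing that the inventory-aware value function dominates the inventory-agnostic one: first substitute the induction hypothesis using nonnegativity of the coefficients, then lower the coefficients via $\phi_{ij}(S \cap \Ncal(\xvec)) \geq \phi_{ij}(S)\,\ind{x_i>0}$, using the bound $U_{t+1}(\xvec) - U_{t+1}(\xvec - \evec_i) \leq r_i$ that comes from the product-separable form of the agnostic value function.
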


\vspace{-2mm}

\noindent{\it Proof.} Let $\Srm_{jt}^\mu$ be the random assortment that sampling-based inventory-agnostic policy $\mu$ offers to a customer of type $j$ at time period $t$. Given that the remaining inventories of the products at time period $t$ correspond to the vector $\xvec$, we use $J_t^\mu(\xvec)$ to denote the total expected revenue of policy $\mu$ over time periods $t,\ldots,T$. We define $\Jbar_t^\mu(\xvec)$ similarly for the inventory-aware counterpart of policy $\mu$. We use induction over the time periods to show that  $\Jbar_t^\mu(\xvec) \geq J_t^\mu(\xvec)$ for all $\xvec \in \mathbb Z_+^n$ and $t \in \Tcal$. We have $\Jbar_{T+1}^\mu(\xvec) = 0 = J_{T+1}^\mu(\xvec)$ for all $\xvec \in \mathbb Z_+^n$, so the result holds at time period $T+1$. Assuming that the result holds at time period $t+1$, we show that the result holds at time period $t$ as well.~We can use a dynamic program to  compute the total expected revenue  of the inventory-aware counterpart of policy $\mu$ over time periods $t,\ldots,T$. Given that the remaining inventories of the products at time period $t$ correspond to the vector $\xvec$, the inventory-aware counterpart of policy $\mu$ offers the assortment $\Srm_{jt}^\mu \cap \Ncal(\xvec)$ to a customer of type $j$ at time period $t$, in which case, noting that all of the products in $\Srm_{jt}^\mu \cap \Ncal(\xvec)$ have remaining inventories, the customer purchases product $i$ with probability $\phi_{ij}(\Srm_{jt}^\mu \cap \Ncal(\xvec))$. Thus, letting $\evec_i \in \{0,1\}^n$ be the $i$-th unit vector, we have 
\begin{align*}
\Jbar_t^\mu(\xvec) ~&\stackrel{(a)}=~
\sum_{j \in \Jcal} \lambda_{jt} \ts \mathbb E  \Bigg\{ \sum_{i \in \Ncal}  \phi_{ij}(\Srm_{jt}^\mu \cap \Ncal(\xvec)) \ts \Big\{ r_i + \Jbar_{t+1}^\mu(\xvec - \evec_i) \Big\} 
+
\Big\{ 1 -  \sum_{i \in \Ncal}  \phi_{ij}(\Srm_{jt}^\mu \cap \Ncal(\xvec)) \Big\} \ts \Jbar_{t+1}^\mu(\xvec) \Bigg\}
\\
~&\stackrel{(b)}\geq~
\sum_{j \in \Jcal} \lambda_{jt} \ts \mathbb E  \Bigg\{ \sum_{i \in \Ncal}  \phi_{ij}(\Srm_{jt}^\mu \cap \Ncal(\xvec)) \ts \Big\{ r_i + J_{t+1}^\mu(\xvec - \evec_i) \Big\} 
+
\Big\{ 1 -  \sum_{i \in \Ncal}  \phi_{ij}(\Srm_{jt}^\mu \cap \Ncal(\xvec)) \Big\} \ts J_{t+1}^\mu(\xvec) \Bigg\}
\\
~&\stackrel{(c)}=~
\sum_{j \in \Jcal} \lambda_{jt} \ts \mathbb E  \Bigg\{ \sum_{i \in \Ncal}  \phi_{ij}(\Srm_{jt}^\mu \cap \Ncal(\xvec)) \ts \Big\{ r_i + J_{t+1}^\mu(\xvec - \evec_i) - J_{t+1}^\mu(\xvec)\Big\} + J_{t+1}^\mu(\xvec) \Bigg\}
\\
~&\stackrel{(d)}\geq~
\sum_{j \in \Jcal} \lambda_{jt} \ts \mathbb E  \Bigg\{ \sum_{i \in \Ncal} \ind{x_i > 0} \ts \phi_{ij}(S) \ts \Big\{ r_i + J_{t+1}^\mu(\xvec - \evec_i) - J_{t+1}^\mu(\xvec)\Big\} + J_{t+1}^\mu(\xvec) \Bigg\}
\\
&\stackrel{(e)}=~
\sum_{j \in \Jcal} \lambda_{jt} \ts \mathbb E  \Bigg\{ \sum_{i \in \Ncal}  \phi_{ij}(\Srm_{jt}^\mu) \ts \ind{x_i > 0}  \Big\{ r_i + J_{t+1}^\mu(\xvec - \evec_i) \Big\} 
+
\Big\{ 1 -  \sum_{i \in \Ncal}  \phi_{ij}(\Srm_{jt}^\mu) \ts \ind{x_i > 0}  \Big\} \ts J_{t+1}^\mu(\xvec) \Bigg\}.
\end{align*}  

Here, $(a)$ uses the dynamic program outlined at the beginning of the proof and the expectation involves the random assortment $\Srm_{jt}^\mu$, $(b)$ uses the induction assumption, $(c)$ and $(e)$ are by arranging the terms and $(d)$ holds because $\phi_{ij}(S \cap \Ncal(\xvec)) \geq  \phi_{ij}(S) \ts \ind{x_i > 0} $ and we shortly show the claim that $J_{t+1}^\mu(\xvec) - J_{t+1}^\mu(\xvec - \evec_i) \leq r_i$ for any sampling-based inventory-agnostic policy $\mu$ as long as $\xvec \geq \evec_i$. Under policy $\mu$, we offer the random assortment $\Srm_{jt}^\mu$ to a customer of type $j$ at time period $t$, the customer chooses product $i$ with probability $\phi_{ij}(\Srm_{jt}^\mu)$ and we make a sale for the product when we have remaining inventory for the product. Thus, the last expression above is equal to $J_t^\mu(\xvec)$, yielding the desired result. To finish the proof, we  argue that $J_t^\mu(\xvec) - J_t^\mu(\xvec - \evec_i) \leq r_i$ for all $t \in \Tcal$ and $\xvec \in \mathbb Z_+^n$ with $\xvec \geq \evec_i$. By the discussion just before (\ref{eqn:sales_dp}), defining $G_{it}(x_i; \mu) = \mathbb E\{ \min \{ x_i , \sum_{k=t}^T \Brm_{ik}^\mu\}\}$, we have \mbox{$J_t^\mu(\xvec) = \sum_{i \in \Ncal} r_i \ts G_{it}(x_i; \mu)$}. Because $\min \{ x_i , \sum_{k=t}^T \Brm_{ik}^\mu\} - \min \{ x_i -1, \sum_{k=t}^T \Brm_{ik}^\mu\} \leq 1$ almost~surely, we get $G_{it}(x_i; \mu) - G_{it}(x_i-1; \mu) \leq 1$, so  $J_t^\mu(\xvec) - J_t^\mu(\xvec - \evec_i) = r_i \ts (G_{it}(x_i; \mu) - G_{it}(x_i-1; \mu)) \leq r_i$. \qed

Thus, the inventory-aware counterpart of a given sampling-based inventory-agnostic policy is guaranteed to perform at least as well as the given policy.

\vspace{-2mm}

\section{Numerical Experiments}
\label{sec:numerics}

\vspace{-2mm}

We use randomly generated test problems to quantify the gains from \mbox{de-randomization}. By Proposition~\ref{pro:derand}, de-randomizing a policy cannot degrade the performance of the original policy, but the de-randomized policy can potentially perform better than the original one. By Theorem~\ref{thm:loc_opt}, a locally-optimal policy has a performance guarantee when compared with the best sampling-based policy. These results work with the total expected revenue of the policy when implemented in an inventory-agnostic fashion. By Proposition \ref{pro:inv}, implementing the inventory-aware counterpart of an inventory-agnostic policy does not degrade its performance.  Our numerical experiments demonstrate that the practical performance gains from de-randomizing a policy can be substantial, especially in the lower tail of the total revenue distribution. Searching for assortments beyond the support of the original policy during de-randomization can deliver further improvements.

We have 54 test problems, each with $n=50$ products and $m=20$ customer types. All customer types choose according to the multinomial logit model. Half of the test problems have $T=200$ time periods in the selling horizon. The other half has $T=1000$ time periods. In Appendix \ref{sec:gen}, we give the full generative formulas for our test problems. 
We de-randomize two baseline policies. First, we solve the so-called choice-based deterministic linear program given in Appendix \ref{sec:cdlp} and randomize the assortment offered to each customer type according to an optimal solution. We call this the CDLP policy. Second, we find the myopic assortment to maximize the expected revenue from customer type $j$ as $S_j^{\text{\sf myo}} = \arg\max_{S \subseteq \Ncal} \sum_{i \in \Ncal} r_i \ts \phi_{ij}(S)$. Indexing the products in the order of decreasing revenues so that $r_1 \geq r_2 \geq \ldots \geq r_n$, under the multinomial logit model, it is known that we can set \mbox{$S_j^{\text{\sf myo}} = \{1,\ldots,k_j^*\}$} for some $k_j^* =1,\ldots,n$. We randomize the assortment offered to customer type $j$ uniformly over the assortments $\{ \{1,\ldots,k\} : 1 \leq k \leq k_j^*\}$. This policy has no performance guarantee and is poor on its own, but we use it precisely to test whether de-randomization can rescue a weak starting point. We call this the uniform-myopic policy.

When de-randomizing each of the two baseline policies, we use two methods. In the support method, we restrict the deterministic assortment offered to each customer type and time period pair to the support of the baseline policy. In the global method, we can choose any subset of products to offer to a customer type and time period pair. 
The optimal objective value of the choice-based deterministic linear program provides an upper bound on the optimal total expected revenue, so we use this upper bound to normalize all total expected revenues. We report three gain metrics for each baseline policy and de-randomization method pair. First, we measure the change in the performance under the inventory-agnostic execution before and after de-randomization, given by $\Delta_{\text{\sf agn}}=\frac{1}{\text{\sf UB}}(R_{\text{\sf agn}}^{\text{\sf det}}-R_{\text{\sf agn}}^{\text{\sf rand}})\times 100\%$, where $R_{\text{\sf agn}}^{\text{\sf rand}}$ and $R_{\text{\sf agn}}^{\text{\sf det}}$ are, respectively, the total expected revenues of the baseline and de-randomized policies under inventory-agnostic execution and \text{\sf UB} is the upper bound. Second, we measure a similar change under the inventory-aware execution, given by \mbox{$\Delta_{\text{\sf awr}}=\frac{1}{\text{\sf UB}}(R_{\text{\sf awr}}^{\text{\sf det}}-R_{\text{\sf awr}}^{\text{\sf rand}})\times 100\%$, where $R_{\text{\sf awr}}^{\text{\sf rand}}$} and $R_{\text{\sf awr}}^{\text{\sf det}}$ are, respectively, the total expected revenues of the baseline and de-randomized policies under inventory-aware execution. We compute the latter two total expected revenues by using Monte Carlo simulation with 1000 sample paths. Third, we measure the change in the tail performance under the inventory-aware execution before and after de-randomization, given by  $\Delta P_{\alpha}=\frac{1}{\text{\sf UB}} \ts (P_\alpha^{\text{\sf det}}- P_\alpha^{\text{\sf rand}})\times 100\%$, where $P_\alpha^{\text{\sf rand}}$ and $P_\alpha^{\text{\sf det}}$ are, respectively, the total expected revenues of the baseline and de-randomized policies under inventory-aware execution when we focus on the $\alpha\%$ of the sample paths with the smallest total revenues. The latter performance measures are also known as CVaR. 
 
\vspace{-0.5mm}

\begin{table}[t]
\vspace{-4mm}
\centering
\scriptsize
\setlength{\tabcolsep}{4pt}
\begin{tabular}{lrrrrrrrr}
\toprule
 & \multicolumn{4}{c}{$T = 200$} & \multicolumn{4}{c}{$T = 1000$} \\
\cmidrule(lr){2-5} \cmidrule(lr){6-9}
 & \multicolumn{2}{c}{CDLP} & \multicolumn{2}{c}{Uniform-myopic} & \multicolumn{2}{c}{CDLP} & \multicolumn{2}{c}{Uniform-myopic} \\
\cmidrule(lr){2-3} \cmidrule(lr){4-5} \cmidrule(lr){6-7} \cmidrule(lr){8-9}
 & Support & Global & Support & Global & Support & Global & Support & Global \\
\midrule
$\Delta_{\text{\sf agn}}$ & 2.0 & 9.7 & 19.3 & 17.5 & 2.5 & 8.3 & 20.4 & 18.8 \\
$\Delta_{\text{\sf awr}}$ & 2.7 & 11.2 & 20.3 & 19.2 & 2.9 & 7.0 & 16.7 & 16.2 \\
\midrule
$\Delta P_1$ & 3.7 & 17.0 & 24.5 & 22.3 & 4.0 & 12.4 & 20.5 & 19.0 \\
$\Delta P_5$ & 3.3 & 16.0 & 23.8 & 21.6 & 3.7 & 11.2 & 19.8 & 18.4 \\
\bottomrule
\end{tabular}
\caption{\!\!\!\!\!\!\!Improvement from de-randomization for each baseline policy and de-randomization method pair.}
\label{tab:main-short}

\vspace{-3ex}

\end{table}

\begin{table}[t]
\vspace{-2mm}
\centering
{\scriptsize
\setlength{\tabcolsep}{4pt}
\begin{tabular}{lrrrrrrrr}
\toprule
 & \multicolumn{4}{c}{$T = 200$} & \multicolumn{4}{c}{$T = 1000$} \\
\cmidrule(lr){2-5} \cmidrule(lr){6-9}
 & \multicolumn{2}{c}{CDLP} & \multicolumn{2}{c}{Uniform-myopic} & \multicolumn{2}{c}{CDLP} & \multicolumn{2}{c}{Uniform-myopic} \\
\cmidrule(lr){2-3} \cmidrule(lr){4-5} \cmidrule(lr){6-7} \cmidrule(lr){8-9}
 & Support & Global & Support & Global & Support & Global & Support & Global \\
\midrule
$R_{\text{\sf agn}}^{\text{\sf rand}}$ & \multicolumn{2}{c}{80.0} & \multicolumn{2}{c}{70.3} & \multicolumn{2}{c}{90.4} & \multicolumn{2}{c}{78.5} \\
$R_{\text{\sf awr}}^{\text{\sf rand}}$ & \multicolumn{2}{c}{85.3} & \multicolumn{2}{c}{76.0} & \multicolumn{2}{c}{92.8} & \multicolumn{2}{c}{83.1} \\
$R_{\text{\sf awr}}^{\text{\sf det}}$ & 87.9 & 96.5 & 96.3 & 95.1 & 95.6 & 99.8 & 99.8 & 99.3 \\
\bottomrule
\end{tabular}
\caption{\!\!\!\!\!\!Performance before and after de-randomization for each baseline policy and de-randomization method pair.}
\label{tab:perf-levels}
}
\vspace{-6ex}
\end{table}

In Table \ref{tab:main-short}, the left and right portions, respectively, focus on the test problems with $T = 200$ and $T=1000$ time periods. For each baseline policy and de-randomization method pair, we give each of the three improvement measures in the previous paragraph averaged over the 27 test problems with a particular number of time periods in the selling horizon. For the CDLP baseline policy, the support method yields somewhat modest improvements at 2.0\% to 2.9\%, but the global method raises the improvements significantly to 7.0\% to 11.2\%. For the uniform-myopic baseline policy, which is poor on its own, both the support and global methods yield substantial improvements at 16.2\% to 20.4\%. The results in Table \ref{tab:main-short} give the performance improvement from de-randomization, but they do not convey whether the de-randomized policies are good. In Table \ref{tab:perf-levels}, we give the total expected revenue for each baseline policy and de-randomization method pair, normalized by the upper bound and averaged over the 27 test problems. For $T=200$ and $T=1000$, the global method obtains de-randomized policies with, respectively,  96.5\% and 99.8\% of the upper bound. Even if the baseline policy is uniform-myopic with total expected revenue of only 78.5\% of the upper bound, we reach virtually optimal policies after de-randomization! The results are striking. A single round of local improvements performs well beyond the theoretical guarantees and yields nearly~optimal policies especially at $T=1000$. In Appendix \ref{sec:exp_details}, we give the details of our~numerical results.

\bibliographystyle{ormsv080}

\renewcommand*{\bibfont}{\footnotesize}
\renewcommand*{\bibfont}{\normalfont\footnotesize\linespread{1}\selectfont}
\setlength{\bibsep}{2.0pt}
\bibliography{references}

\newpage
\ECSwitch

\begin{center}
\vspace{-9mm}
{\large  \underline{Electronic Companion}: \break Killing the Case for Randomization in Dynamic Assortment Optimization} 
\\
June 24, 2026
\end{center}

\begin{APPENDICES}

\vspace{-2mm}

\section{Generation of Our Test Problems}
\label{sec:gen}

\vspace{-2mm}

We give the full generative procedure for our test problems. In all test problems, the number of products is $n=50$ and the number of customer types is $m=20$. All customer types choose according to the multinomial logit model. Under the multinomial logit model, a customer of type~$j$ associates the preference weight $v_{ij}$ with product $i$. If we offer the assortment $S$ to a customer of type $j$, then the customer purchases product $i \in S$ with probability \mbox{$\phi_{ij}(S) = \frac{v_{ij}}{1 + \sum_{k \in S} v_{kj}}$}. We use the following approach to generate the preference weights. For each customer type $j$, we draw the size of its consideration set  $L_j$  from $\mathrm{Unif}\{\lfloor 0.4\, n \rfloor,\ldots,\lfloor 0.8\, n \rfloor\}$, in which case, we draw the consideration set~$\Ccal_j$ of customer type~$j$ uniformly over all subsets of products with size $L_j$. Customers of type $j$ make a purchase among the products in $\Ccal_j$. We draw a raw preference weight $w_{ij}$ from $\mathrm{Unif}[0.5,2.0]$ for each product $i \in \Ccal_j$, whereas we set $w_{ij}=0$ for $i \notin \mathcal{C}_j$. In this case, the preference weight that a customer of type~$j$ associates with product $i$ is given by $v_{ij} = \frac{1-P_0}{P_0\sum_{k\in\mathcal{C}_j} w_{kj}} \ts w_{ij}$, where $P_0$ is a parameter that we vary. If we offer the full consideration set $\mathcal{C}_j$ to a  customer of type $j$, then the customer purchases some product with probability $\sum_{i \in \Ccal_j} \phi_{ij}(\Ccal_j) = \frac{(1-P_0) / P_0}{1 + (1-P_0)/P_0} = 1 - P_0$, so the parameter $P_0$ controls the propensity of the customers to leave without a purchase.

We use time-varying arrival probabilities for customers of different types. At each time period, we have one customer arrival. The probability that a customer of type $j$ arrives at time period $t$ is given by  $\lambda_{jt}=\frac{1}{\sum_{\ell \in \Jcal} \exp(-\kappa |t-T_\ell^{\text{\sf peak}}|)} \ts \exp(-\kappa |t-T_j^{\text{\sf peak}}|)$, where $T_j^{\text{\sf peak}}$ is the peak arrival time period for customer type $j$. The parameter $\kappa$ controls the rate of decline of the arrival probabilities as we move away from the peak arrival time period. We vary this parameter as well. We set up the peak arrival time periods in such a way that customer types with larger consideration sets receive earlier peak arrival time periods. This ordering puts significant pressure on \mbox{inventory-agnostic} policies in the sense that flexible customer types with larger consideration sets arrive while inventory is still abundant and consume products that pickier customer types depend on later in the selling horizon, stimulating the need for inventory-aware adjustments to the sampling-based policy. Concretely, we order the $m$ customer types by the sizes of their consideration sets in decreasing order and place their peak arrival time periods equally spaced across the selling horizon, so the type with the largest consideration set peaks at $t=1$ and the type with the smallest consideration set peaks at $t=T$. Revenue $r_i$ of each product $i$ is drawn independently from $\mathrm{Unif}[1,10]$. 

To come up with the initial inventories of the products, we compute the myopic assortment to maximize the expected revenue from customer type $j$ as $S_j^{\text{\sf myo}} = \arg\max_{S \subseteq \Ncal} \sum_{i \in \Ncal} r_i \ts \phi_{ij}(S)$. If we always offer the myopic assortments, then the total expected demand for product $i$ is given by $ d_i = \sum_{t \in \Tcal} \sum_{j \in \Jcal} \lambda_{jt} \ts \phi_{ij}(S_j^{\text{\sf myo}})$. We set the initial inventory of product $i$ as  $c_i=\lceil \eta \ts d_i\rceil$, where $\eta$ is one last parameter that we vary to control the scarcity of the product inventories. 
Varying the parameters $\kappa\in\{0,0.02,0.10\}$, $P_0\in\{0.1,0.2,0.3\}$ and $\eta\in\{0.5,0.65,0.8\}$ yields $27$ parameter configurations. Note that if $\kappa = 0$, then the arrival probabilities are uniform across types at each time period. For each of the 27 parameter configurations, we use the approach in this section to generate two test problems, one with $T= 200$ and one with $T = 1000$ time periods in the selling horizon. In this case, we obtain a total of 54 test problems in our numerical setup.

\vspace{-1mm}

\section{Choice-Based Deterministic Linear Program}
\label{sec:cdlp}

\vspace{-1mm}

We give the choice-based deterministic linear program that forms the basis of the CDLP policy. This linear program is as a crude approximation to capture the decisions of the optimal policy, constructed under the assumption that the arrivals and choices of the customers take on their expected values; see \cite{RyLi04}. In the linear program, we set $R_j(S)=\sum_{i\in S} r_i\,\phi_{ij}(S)$ to capture the expected revenue that we obtain when we offer the assortment $S$ to a customer of type $j$. We use the decision variables \mbox{$\xvec = (x_j(S) : j \in \Jcal,~S \subseteq \Ncal) \in \mathbb R_+^{m 2^n}$}, where $x_j(S)$ captures  the probability of offering assortment $S$ to a customer of type $j$. Noting that the total expected number of arrivals of customers of type $j$ is $\tau_j = \sum_{t \in \Tcal} \lambda_{jt}$, we consider the linear program 
\begin{align*}
\max_{\xvec \in  \mathbb R_+^{m 2^n}}\;\; & \sum_{j \in \Jcal}\tau_j \sum_{S\subseteq \Ncal} R_j(S)\,x_j(S) \\
\text{s.t.}\quad
& \sum_{j \in \Jcal}\tau_j \sum_{S \subseteq \Ncal} \phi_{ij}(S)\,x_j(S) \;\le\; c_i, &\forall\, i\in\Ncal,\\
& \sum_{S\subseteq \Ncal} x_j(S) \;=\; 1, &\forall\, j\in\Jcal.
\end{align*}

\indent In the objective function, we account for the total expected revenue over the selling horizon. The first constraint ensures that the total expected inventory consumption of product $i$ does not exceed its initial inventory. The second constraint ensures that we offer an assortment to each customer type with probability one. Letting $\xvec^*$ be an optimal solution to the problem above, the CDLP policy offers, at every arrival of a customer of type $j$, assortment $S$ with probability $x_j^*(S)$. The number of decision variables above increases exponentially with the number of products. Thus, we often solve the problem above by using column generation. If the choices are governed by the multinomial logit model, then one can bypass column generation; see \cite{GaRa15}.


\vspace{-1mm}

\section{Details of Our Numerical Results}
\label{sec:exp_details}

\vspace{-1mm}

We give the percentile and variance breakdowns of the total revenues achieved by the policies we test, as well as the running times and detailed per-instance results that complement the highlights of the experimental findings in the main text. In Figure \ref{fig:percentile-profile}, we plot the values of $\Delta P_\alpha$ as a function of the percentile level $\alpha$ for all four baseline policy and de-randomization method pairs. For every pair, the values of $\Delta P_\alpha$ are monotonically decreasing in $\alpha$, indicating that \mbox{de-randomization} yields larger improvements on the lower revenue sample paths. The two data series for the \mbox{uniform-myopic} baseline policy are close with the support method holding a slight edge, indicating that while the global method delivers consistently strong improvements to the original policy, when the original support is already rich, searching beyond it can be detrimental. For the CDLP baseline policy, by contrast, the gap between the support and global de-randomization methods is substantial and grows deeper in the tail. In Table \ref{tab:main-results}, we extend the results in Table~\ref{tab:main-short} to give the performance improvements from \mbox{de-randomization} when we focus on the lower tails of total revenues for the different baseline policy and de-randomization method pairs. The results support the earlier observation that \mbox{de-randomization} yields larger improvements on the lower revenue sample paths, as the values of $\Delta P_\alpha$ are decreasing in $\alpha$.

\begin{figure}[t]
\vspace{-3mm}
\centering
\includegraphics[width=0.49\textwidth]{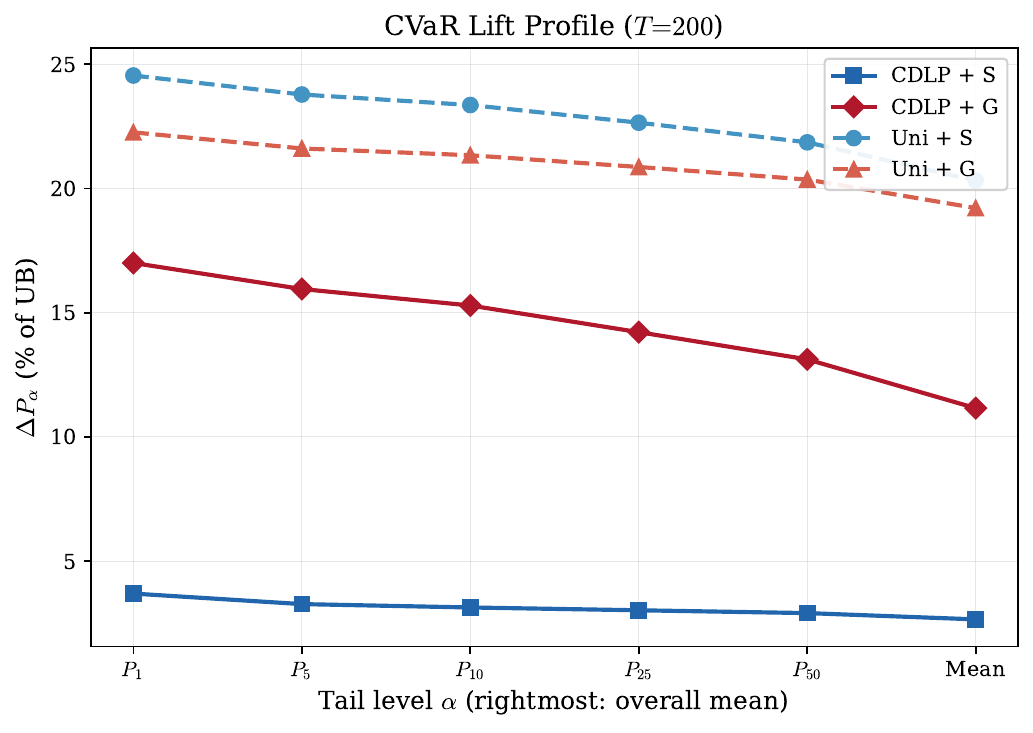}
\includegraphics[width=0.49\textwidth]{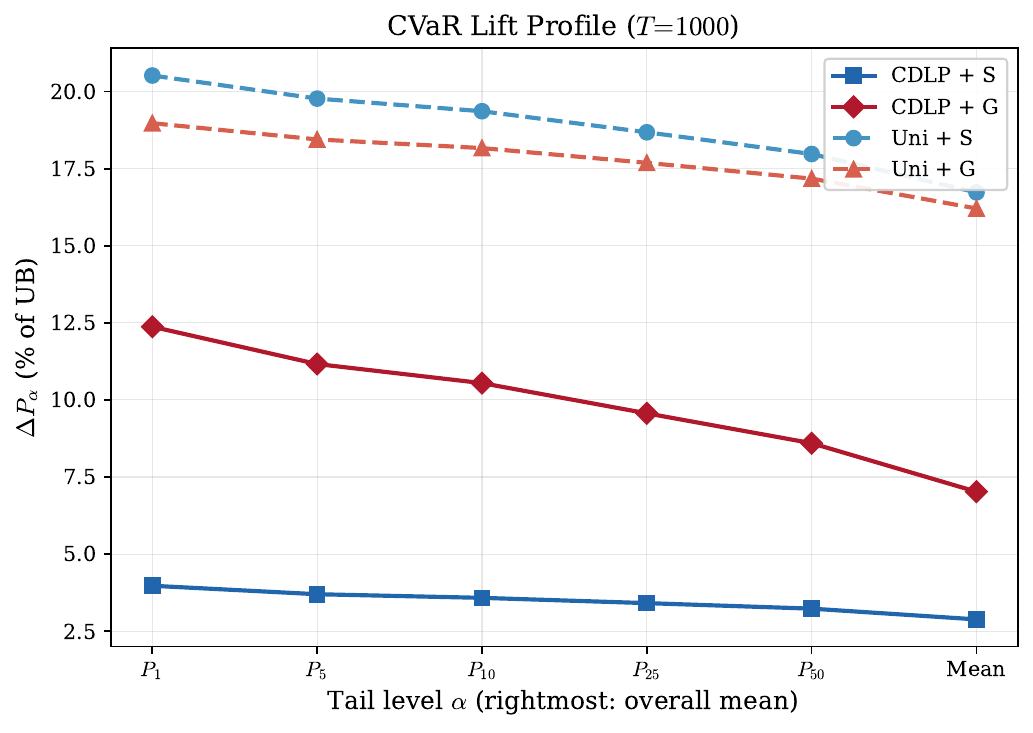}
\caption{\!\!\!\!\!\!Values of $\Delta P_{\alpha}$ as a function of $\alpha$ for all four baseline policy and de-randomization method pairs. 
}
\label{fig:percentile-profile}

\vspace{-4mm}
\end{figure}

\begin{table}[t]
\centering
{\scriptsize
\setlength{\tabcolsep}{4pt}
\begin{tabular}{lrrrrrrrr}
\toprule
 & \multicolumn{4}{c}{$T = 200$} & \multicolumn{4}{c}{$T = 1000$} \\
\cmidrule(lr){2-5} \cmidrule(lr){6-9}
 & \multicolumn{2}{c}{CDLP} & \multicolumn{2}{c}{Uniform-myopic} & \multicolumn{2}{c}{CDLP} & \multicolumn{2}{c}{Uniform-myopic} \\
\cmidrule(lr){2-3} \cmidrule(lr){4-5} \cmidrule(lr){6-7} \cmidrule(lr){8-9}
 & Support & Global & Support & Global & Support & Global & Support & Global \\
\midrule
$\Delta_{\text{\sf agn}}$ & 2.0 & 9.7 & 19.3 & 17.5 & 2.5 & 8.3 & 20.4 & 18.8 \\
$\Delta_{\text{\sf awr}}$ & 2.7 & 11.2 & 20.3 & 19.2 & 2.9 & 7.0 & 16.7 & 16.2 \\
\midrule
$\Delta P_1$ & 3.7 & 17.0 & 24.5 & 22.3 & 4.0 & 12.4 & 20.5 & 19.0 \\
$\Delta P_5$ & 3.3 & 16.0 & 23.8 & 21.6 & 3.7 & 11.2 & 19.8 & 18.4 \\
$\Delta P_{10}$ & 3.1 & 15.3 & 23.4 & 21.3 & 3.6 & 10.5 & 19.4 & 18.2 \\
$\Delta P_{25}$ & 3.0 & 14.2 & 22.6 & 20.9 & 3.4 & 9.6 & 18.7 & 17.7 \\
$\Delta P_{50}$ & 2.9 & 13.1 & 21.9 & 20.4 & 3.2 & 8.6 & 18.0 & 17.2 \\
\bottomrule
\end{tabular}
}
\caption{\!\!\!\!\!\!Improvement from de-randomization for each baseline policy and de-randomization method pair.}
\label{tab:main-results}
\vspace{-8mm}
\end{table}

In Table~\ref{tab:cv-improvement}, we give the coefficient of variation of the total revenue for each baseline policy and de-randomization method pair before and after de-randomization, demonstrating that \mbox{de-randomization} also reduces the total revenue variability. We compute the coefficient of variation of the total revenue of a policy as $100\%\times\sigma/\mu$, where $\mu$ and $\sigma$ are, respectively, the mean and standard deviation of the total revenue across the 1000 Monte Carlo replications. Coefficient of variation is non-zero even for a de-randomized policy because customer arrivals and purchase outcomes are random; de-randomization eliminates only the  randomness in the offered assortments. For each test problem, the coefficient of variation reflects the variability in the total revenue of a policy, considering the randomness in the assortments offered by the policy, types of the arriving customers and purchase decisions. The coefficients of variation in Table~\ref{tab:cv-improvement} are averages of these per-instance coefficients of variation across the 27 test problems with different parameter configurations. We use $\text{CV}_\text{\sf rand}$  and $\text{CV}_\text{\sf det}$ to capture the coefficient of variation of a baseline policy and de-randomization method pair, respectively, before and after de-randomization. The largest relative coefficient of variation reduction of 92.2\% comes from the \mbox{uniform-myopic} baseline policy with the support de-randomization method, but even the most conservative pair, the CDLP baseline policy with the support \mbox{de-randomization} method, significantly reduces the coefficient of variation. The effect is especially pronounced for larger number of time periods in the selling horizon with $T=1000$, where the best combination leaves only a  small residual coefficient of variation.

\begin{table}[t]

\centering
{\scriptsize
\setlength{\tabcolsep}{4pt}
\begin{tabular}{lrrrrrrrr}
\toprule
 & \multicolumn{4}{c}{$T = 200$} & \multicolumn{4}{c}{$T = 1000$} \\
\cmidrule(lr){2-5} \cmidrule(lr){6-9}
 & \multicolumn{2}{c}{CDLP} & \multicolumn{2}{c}{Uniform-myopic} & \multicolumn{2}{c}{CDLP} & \multicolumn{2}{c}{Uniform-myopic} \\
\cmidrule(lr){2-3} \cmidrule(lr){4-5} \cmidrule(lr){6-7} \cmidrule(lr){8-9}
 & Support & Global & Support & Global & Support & Global & Support & Global \\
\midrule
CV$_{\text{\sf rand}}$ (\%) & \multicolumn{2}{c}{5.09} & \multicolumn{2}{c}{5.06} & \multicolumn{2}{c}{2.32} & \multicolumn{2}{c}{2.04} \\
CV$_{\text{\sf det}}$ (\%) & 4.58 & 2.02 & 2.03 & 2.58 & 1.80 & 0.22 & 0.17 & 0.50 \\
Relative reduction (\%) & 9.7 & 58.2 & 62.0 & 50.9 & 21.7 & 89.3 & 92.2 & 77.0 \\
\bottomrule
\end{tabular}
}
\caption{Coefficient of variation of  total revenue for each baseline policy and de-randomization method pair.}
\label{tab:cv-improvement}
\vspace{-0mm}
\end{table}

If the baseline policy does not randomize over too many assortments, then we expect the benefits from de-randomization to be more modest. To understand the role of randomness in the initial policy on the benefits from de-randomization, we summarize the extent of randomization in policy $\mu$ by the arrival-weighted entropy $H(\mu) = \frac{1}{T}\sum_{t \in \Tcal} \sum_{j \in \Jcal} \lambda_{jt}\, H_{jt}(\mu)$, where $H_{jt}(\mu)$ is the Shannon entropy of the assortment distribution used at customer type and time period pair $(j,t)$ given by $H_{jt}(\mu)=-\sum_{S\in\Omega^{\mu}_{jt}} \mathbb{P}\{\Srm^{\mu}_{jt}=S\}\log_2 \mathbb{P}\{\Srm^{\mu}_{jt}=S\}$. In our test problems with $T = 1000$, on average, the CDLP baseline policy has mean entropy of $0.86$ bits and the uniform-myopic baseline policy has mean entropy of $3.92$ bits, reflecting the much narrower support sizes of the CDLP baseline policy relative to the uniform-myopic one. In Figure~\ref{fig:entropy-scatter}, we plot the arrival-weighted entropy of the randomized baseline policy against the performance improvement from de-randomization given by $\Delta_{\text{\sf awr}}$ at $T=1000$, separately for the support and global de-randomization methods. The CDLP data points cluster at low entropy with smaller gains, while the uniform-myopic data points occupy a higher-entropy region with larger gains. Under the support de-randomization method, the CDLP data points show essentially no correlation between entropy and improvement; this is expected because the support of the CDLP baseline policy contains only a handful of assortments, leaving the support method with limited room for improvement regardless of the entropy level. Under the global de-randomization method, however, a clear positive relation emerges. In particular, once the search is expanded beyond the narrow original support, higher initial randomness in the policy translates directly into greater room for improvement. We emphasize that the correlations that we read from Figure~\ref{fig:entropy-scatter} are not statistically significant at the size of our experiments with 27 parameter configurations; they are included only to illustrate qualitative trends.

\begin{figure}[t]
\centering
\includegraphics[width=0.49\textwidth]{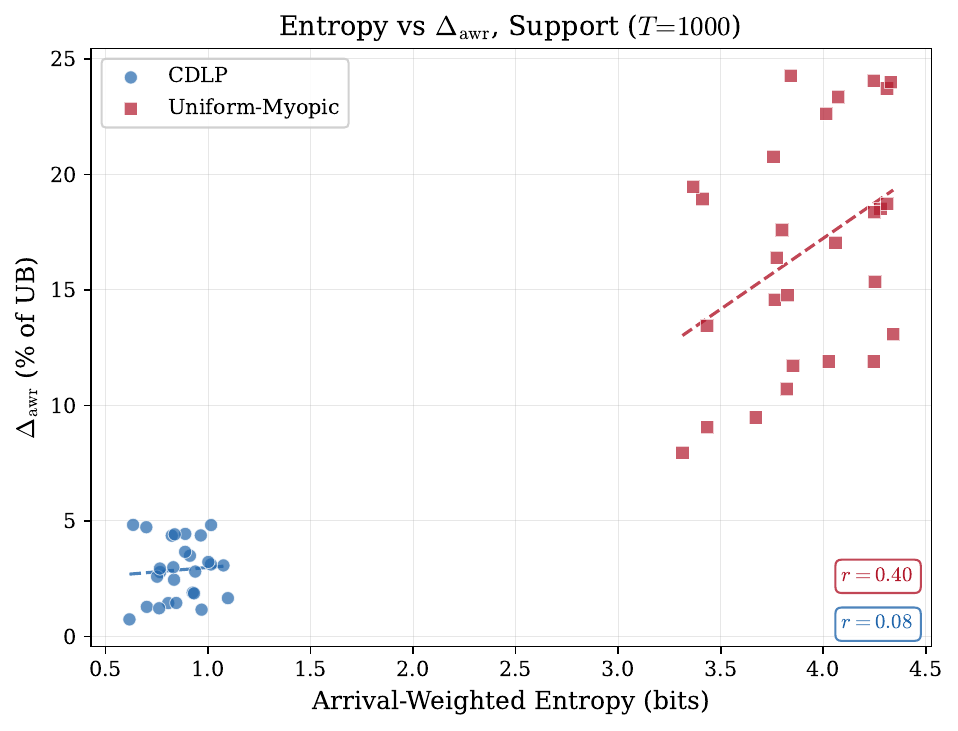}
\includegraphics[width=0.49\textwidth]{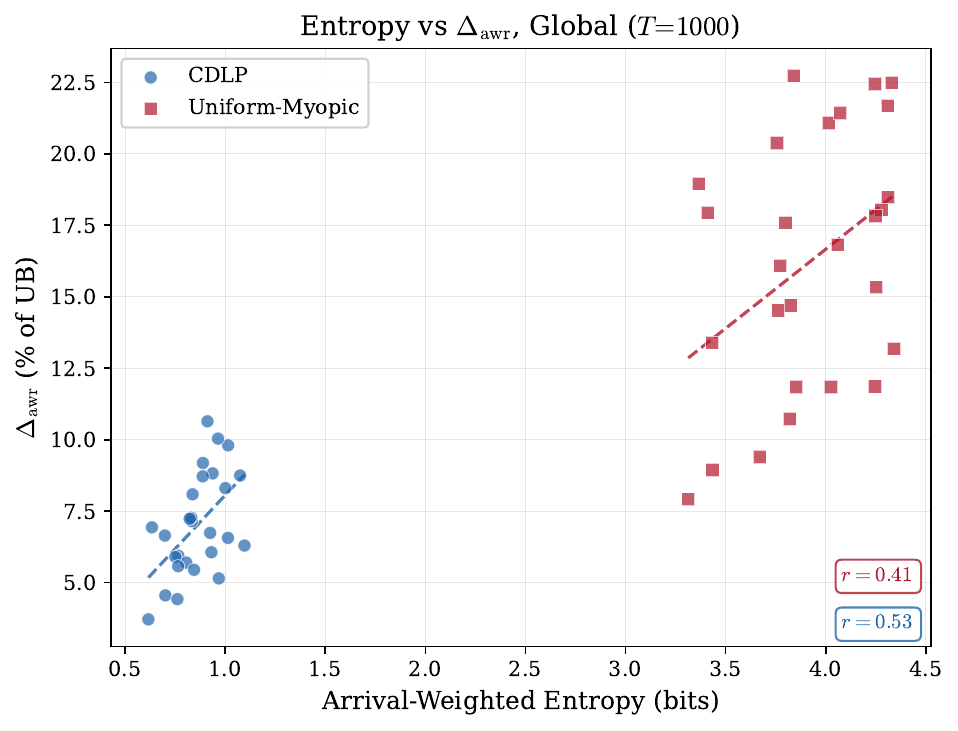}
\caption{Arrival-weighted entropy in bits versus $\Delta_{\text{\sf awr}}$ at $T=1000$, grouped by de-randomization method. Each panel shows both baselines with separate regression lines and Pearson correlations.}
\label{fig:entropy-scatter}
\vspace{-2mm}
\end{figure}

In Table~\ref{tab:runtime}, we give the de-randomization running times in seconds, averaged over all instances for the two possible values of the number of time periods in the selling horizon. Empirically, running time grows more slowly than the worst-case $T^3$ scaling of the oracle bound  that we give in Section~\ref{sec:derand}.~The global de-randomization method is slightly slower because it optimizes over a larger candidate family at each coordinate, but each time period requires only a single oracle call as discussed just after (\ref{eqn:rev_dec}) in the main text  and previous oracle computations can often be reused, so the overhead for the global de-randomization method remains modest.

\raggedbottom
\begin{table}[t]
\centering
{\small
\begin{tabular}{crrrr}
\toprule
 & \multicolumn{2}{c}{Support} & \multicolumn{2}{c}{Global} \\
\cmidrule(lr){2-3} \cmidrule(lr){4-5}
$T$ & Mean & Max & Mean & Max \\
\midrule
~200 & 3.4 & 4.9 & 6.4 & 8.3 \\
1000 & 160.8 & 211.4 & 175.8 & 226.5 \\
\bottomrule
\end{tabular}
}
\caption{Mean de-randomization running time per instance in seconds.}
\label{tab:runtime}
\vspace{-8mm}
\end{table}

Finally, Tables~\ref{tab:raw-cdlp-200}-\ref{tab:raw-unif-1000} collect the instance-level results for the $27$ parameter configurations at both selling horizon lengths. The format of all of these tables is the same. In the first column, we give the parameter tuple $(\kappa,P_0,\eta)$, where, as in Appendix \ref{sec:gen}, $\kappa$ controls the rate of decline of the arrival probabilities as we move away from the peak arrival time period, $P_0$ is the no-purchase probability when the full consideration set is offered and $\eta$ is the capacity scaling parameter for the product inventories. In the second column, we give the arrival-weighted entropy for the initial baseline policy. In the third column, we give the average initial inventory per product, given by $\overline c = \sum_{i \in \Ncal} c_i / n$. In the rest of the tables, there are two blocks. In the first block, we focus on the support de-randomization method. We give the values of $\Delta_{\text{\sf awr}}$, as well as $\Delta P_1$, $\Delta P_5$ and $\Delta P_{25}$, characterizing the performance improvement from de-randomization on average and when we focus on the sample paths with smaller percentiles of total revenues. In the second block, we give the same statistics for the global de-randomization method. The last row reports unweighted averages across the $27$ parameter configurations. In Tables \ref{tab:raw-cdlp-200} and \ref{tab:raw-unif-200}, we focus on the test problems with $T = 200$, where each of these two tables focuses on the CDLP and uniform-myopic baseline policies. In Tables \ref{tab:raw-cdlp-1000} and \ref{tab:raw-unif-1000}, we focus on the test problems with $T = 1000$, where, once again, each of these two tables focuses on the CDLP and uniform-myopic baseline policies.

\renewcommand{\arraystretch}{0.92}

\begin{table}[t]
\vspace{-4mm}
\centering
\scriptsize
\begin{tabular}{cccrrrrrrrr}
\toprule
$(\kappa,P_0,\eta)$ & $H$ & $\bar{c}$ & \multicolumn{4}{c}{Support} & \multicolumn{4}{c}{Global} \\
\cmidrule(lr){4-7} \cmidrule(lr){8-11}
 & & & $\Delta_{\text{\sf awr}}$ & $\Delta P_1$ & $\Delta P_5$ & $\Delta P_{25}$ & $\Delta_{\text{\sf awr}}$ & $\Delta P_1$ & $\Delta P_5$ & $\Delta P_{25}$ \\
\midrule
(0.00,\,0.1,\,0.50) & 0.64 & 1.7 & 4.3 & 5.1 & 5.8 & 5.6 & 14.7 & 26.9 & 25.2 & 21.0 \\
(0.00,\,0.1,\,0.65) & 0.68 & 2.3 & 3.1 & 5.2 & 4.7 & 3.9 & 12.5 & 21.8 & 19.6 & 17.2 \\
(0.00,\,0.1,\,0.80) & 0.80 & 2.8 & 0.9 & 0.7 & 0.9 & 0.9 & 7.5 & 10.8 & 10.3 & 9.4 \\
(0.00,\,0.2,\,0.50) & 0.79 & 1.7 & 3.7 & 3.5 & 4.1 & 4.0 & 17.1 & 26.6 & 24.9 & 22.1 \\
(0.00,\,0.2,\,0.65) & 0.83 & 2.0 & 2.8 & 3.9 & 3.2 & 3.2 & 12.2 & 18.5 & 17.2 & 15.4 \\
(0.00,\,0.2,\,0.80) & 0.69 & 2.4 & 1.8 & 2.5 & 2.2 & 1.7 & 6.4 & 8.6 & 8.4 & 7.7 \\
(0.00,\,0.3,\,0.50) & 0.94 & 1.6 & 3.0 & 4.4 & 3.4 & 3.3 & 17.9 & 25.5 & 24.4 & 22.1 \\
(0.00,\,0.3,\,0.65) & 1.01 & 1.8 & 1.7 & 3.3 & 1.7 & 1.8 & 12.8 & 16.9 & 16.1 & 15.0 \\
(0.00,\,0.3,\,0.80) & 0.74 & 2.3 & 1.8 & 2.9 & 2.4 & 2.2 & 3.9 & 5.4 & 5.6 & 4.7 \\
(0.02,\,0.1,\,0.50) & 0.72 & 1.8 & 4.7 & 8.4 & 6.5 & 5.6 & 14.1 & 26.6 & 24.1 & 20.2 \\
(0.02,\,0.1,\,0.65) & 0.86 & 2.3 & 3.7 & 5.0 & 4.4 & 4.0 & 11.4 & 18.4 & 16.9 & 14.8 \\
(0.02,\,0.1,\,0.80) & 0.88 & 2.7 & 0.9 & 0.6 & 0.8 & 1.0 & 6.2 & 10.3 & 8.8 & 7.7 \\
(0.02,\,0.2,\,0.50) & 0.90 & 1.7 & 3.9 & 3.4 & 4.2 & 4.4 & 17.0 & 27.7 & 25.0 & 22.2 \\
(0.02,\,0.2,\,0.65) & 0.82 & 2.0 & 2.8 & 4.8 & 4.2 & 3.5 & 13.0 & 19.7 & 18.3 & 16.6 \\
(0.02,\,0.2,\,0.80) & 0.71 & 2.5 & 1.1 & 3.1 & 2.1 & 1.2 & 5.9 & 6.3 & 6.7 & 6.7 \\
(0.02,\,0.3,\,0.50) & 1.01 & 1.6 & 3.3 & 3.0 & 3.0 & 3.3 & 17.1 & 23.4 & 22.7 & 20.9 \\
(0.02,\,0.3,\,0.65) & 1.02 & 1.9 & 2.3 & 1.8 & 1.9 & 2.3 & 11.5 & 16.4 & 14.9 & 13.6 \\
(0.02,\,0.3,\,0.80) & 0.68 & 2.4 & 0.7 & 1.6 & 1.1 & 0.6 & 1.7 & 1.4 & 1.9 & 1.9 \\
(0.10,\,0.1,\,0.50) & 0.83 & 1.9 & 6.0 & 7.4 & 6.3 & 6.5 & 16.0 & 25.0 & 23.9 & 21.5 \\
(0.10,\,0.1,\,0.65) & 0.92 & 2.4 & 3.0 & 4.2 & 4.1 & 3.5 & 11.4 & 18.4 & 17.0 & 15.0 \\
(0.10,\,0.1,\,0.80) & 0.60 & 2.7 & 1.4 & 2.3 & 1.7 & 1.7 & 6.4 & 9.7 & 9.1 & 8.2 \\
(0.10,\,0.2,\,0.50) & 0.86 & 1.6 & 4.5 & 5.4 & 5.4 & 5.1 & 15.9 & 25.5 & 24.0 & 20.9 \\
(0.10,\,0.2,\,0.65) & 0.99 & 2.1 & 2.7 & 4.0 & 3.0 & 3.0 & 11.8 & 17.5 & 16.4 & 14.4 \\
(0.10,\,0.2,\,0.80) & 0.94 & 2.6 & 1.0 & 0.8 & 0.9 & 1.2 & 4.1 & 6.0 & 5.5 & 5.0 \\
(0.10,\,0.3,\,0.50) & 0.93 & 1.5 & 3.1 & 4.3 & 3.5 & 3.3 & 16.5 & 23.7 & 22.6 & 20.3 \\
(0.10,\,0.3,\,0.65) & 1.01 & 1.9 & 2.3 & 5.1 & 4.2 & 3.0 & 12.3 & 17.3 & 16.1 & 14.7 \\
(0.10,\,0.3,\,0.80) & 0.96 & 2.3 & 1.1 & 3.2 & 2.5 & 1.7 & 3.9 & 4.9 & 4.9 & 4.7 \\
\midrule
Mean & & & 2.7 & 3.7 & 3.3 & 3.0 & 11.2 & 17.0 & 16.0 & 14.2 \\
\bottomrule
\end{tabular}
\caption{Detailed results for the CDLP baseline policy and $T = 200$.}\label{tab:raw-cdlp-200}
\vspace{-5mm}
\end{table}

\begin{table}[!ht]
\vspace{-4mm}
\centering
\scriptsize
\begin{tabular}{cccrrrrrrrr}
\toprule
$(\kappa,P_0,\eta)$ & $H$ & $\bar{c}$ & \multicolumn{4}{c}{Support} & \multicolumn{4}{c}{Global} \\
\cmidrule(lr){4-7} \cmidrule(lr){8-11}
 & & & $\Delta_{\text{\sf awr}}$ & $\Delta P_1$ & $\Delta P_5$ & $\Delta P_{25}$ & $\Delta_{\text{\sf awr}}$ & $\Delta P_1$ & $\Delta P_5$ & $\Delta P_{25}$ \\
\midrule
(0.00,\,0.1,\,0.50) & 3.31 & 1.7 & 12.3 & 20.3 & 18.6 & 16.5 & 12.0 & 18.0 & 17.2 & 15.8 \\
(0.00,\,0.1,\,0.65) & 3.76 & 2.3 & 17.8 & 23.8 & 22.9 & 21.7 & 16.5 & 19.8 & 19.6 & 18.8 \\
(0.00,\,0.1,\,0.80) & 3.76 & 2.8 & 23.6 & 28.0 & 27.2 & 25.8 & 20.3 & 22.6 & 21.7 & 21.2 \\
(0.00,\,0.2,\,0.50) & 3.82 & 1.7 & 16.1 & 20.7 & 20.2 & 18.8 & 16.3 & 19.5 & 19.0 & 18.6 \\
(0.00,\,0.2,\,0.65) & 3.80 & 2.0 & 21.5 & 25.6 & 25.1 & 23.9 & 20.4 & 23.3 & 23.1 & 22.3 \\
(0.00,\,0.2,\,0.80) & 3.84 & 2.4 & 25.3 & 27.5 & 27.1 & 26.6 & 23.3 & 24.3 & 24.5 & 24.1 \\
(0.00,\,0.3,\,0.50) & 4.34 & 1.6 & 21.6 & 26.6 & 26.1 & 24.5 & 20.7 & 24.6 & 23.6 & 22.7 \\
(0.00,\,0.3,\,0.65) & 4.28 & 1.8 & 21.6 & 24.3 & 23.7 & 23.3 & 20.9 & 24.4 & 22.9 & 22.2 \\
(0.00,\,0.3,\,0.80) & 4.25 & 2.3 & 23.4 & 25.5 & 24.4 & 23.9 & 22.2 & 23.1 & 22.2 & 22.6 \\
(0.02,\,0.1,\,0.50) & 3.44 & 1.8 & 13.5 & 22.2 & 19.7 & 17.3 & 13.4 & 20.1 & 19.1 & 17.0 \\
(0.02,\,0.1,\,0.65) & 3.43 & 2.3 & 18.8 & 24.1 & 22.9 & 21.4 & 17.7 & 21.8 & 20.8 & 19.8 \\
(0.02,\,0.1,\,0.80) & 3.41 & 2.7 & 21.3 & 24.4 & 23.8 & 22.9 & 18.7 & 19.6 & 19.7 & 19.6 \\
(0.02,\,0.2,\,0.50) & 4.03 & 1.7 & 17.9 & 24.0 & 23.2 & 21.4 & 17.4 & 23.6 & 22.1 & 20.3 \\
(0.02,\,0.2,\,0.65) & 3.78 & 2.0 & 20.1 & 23.8 & 23.2 & 22.5 & 18.4 & 21.1 & 20.3 & 19.8 \\
(0.02,\,0.2,\,0.80) & 4.01 & 2.5 & 24.2 & 25.6 & 25.6 & 25.3 & 22.1 & 22.5 & 22.5 & 22.5 \\
(0.02,\,0.3,\,0.50) & 4.25 & 1.6 & 18.9 & 21.2 & 21.9 & 21.2 & 18.5 & 21.6 & 20.9 & 20.3 \\
(0.02,\,0.3,\,0.65) & 4.32 & 1.9 & 21.5 & 24.2 & 23.8 & 23.0 & 21.0 & 23.5 & 22.8 & 22.1 \\
(0.02,\,0.3,\,0.80) & 4.31 & 2.4 & 22.0 & 23.4 & 22.9 & 22.5 & 21.5 & 23.9 & 22.7 & 22.0 \\
(0.10,\,0.1,\,0.50) & 3.67 & 1.9 & 13.9 & 21.5 & 19.9 & 17.9 & 13.6 & 18.9 & 18.1 & 16.7 \\
(0.10,\,0.1,\,0.65) & 3.82 & 2.4 & 19.6 & 26.0 & 25.3 & 23.3 & 18.4 & 22.0 & 21.5 & 20.5 \\
(0.10,\,0.1,\,0.80) & 3.37 & 2.7 & 20.7 & 24.8 & 23.8 & 22.5 & 18.8 & 20.2 & 19.8 & 19.4 \\
(0.10,\,0.2,\,0.50) & 3.85 & 1.6 & 18.8 & 25.4 & 24.1 & 22.1 & 18.2 & 22.9 & 21.6 & 20.6 \\
(0.10,\,0.2,\,0.65) & 4.06 & 2.1 & 21.8 & 27.3 & 26.1 & 24.4 & 20.2 & 23.7 & 23.0 & 21.9 \\
(0.10,\,0.2,\,0.80) & 4.07 & 2.6 & 23.2 & 23.3 & 23.8 & 24.0 & 21.5 & 23.4 & 22.6 & 22.0 \\
(0.10,\,0.3,\,0.50) & 4.25 & 1.5 & 23.0 & 28.6 & 27.5 & 26.0 & 22.3 & 26.1 & 25.5 & 24.5 \\
(0.10,\,0.3,\,0.65) & 4.25 & 1.9 & 22.8 & 24.8 & 24.8 & 24.4 & 22.1 & 24.1 & 23.8 & 23.3 \\
(0.10,\,0.3,\,0.80) & 4.33 & 2.3 & 23.5 & 25.8 & 24.7 & 24.5 & 22.5 & 22.3 & 22.9 & 22.9 \\
\midrule
Mean & & & 20.3 & 24.5 & 23.8 & 22.6 & 19.2 & 22.3 & 21.6 & 20.9 \\
\bottomrule
\end{tabular}
\caption{Detailed results for the uniform-myopic baseline policy and $T = 200$.}\label{tab:raw-unif-200}
\vspace{-5mm}
\end{table}

\begin{table}[!ht]
\centering
\scriptsize
\begin{tabular}{ccrrrrrrrrr}
\toprule
$(\kappa,P_0,\eta)$ & $H$ & $\bar{c}~~$ & \multicolumn{4}{c}{Support} & \multicolumn{4}{c}{Global} \\
\cmidrule(lr){4-7} \cmidrule(lr){8-11}
 & & & $\Delta_{\text{\sf awr}}$ & $\Delta P_1$ & $\Delta P_5$ & $\Delta P_{25}$ & $\Delta_{\text{\sf awr}}$ & $\Delta P_1$ & $\Delta P_5$ & $\Delta P_{25}$ \\
\midrule
(0.00,\,0.1,\,0.50) & 0.63 & 7.8 & 4.8 & 7.2 & 6.7 & 5.9 & 6.9 & 13.9 & 12.3 & 10.1 \\
(0.00,\,0.1,\,0.65) & 0.77 & 10.7 & 2.8 & 4.3 & 3.9 & 3.3 & 5.9 & 11.3 & 9.9 & 8.3 \\
(0.00,\,0.1,\,0.80) & 0.70 & 13.0 & 1.3 & 1.9 & 1.6 & 1.5 & 4.6 & 8.6 & 7.8 & 6.5 \\
(0.00,\,0.2,\,0.50) & 0.89 & 7.1 & 4.4 & 5.7 & 5.5 & 5.2 & 9.2 & 15.6 & 14.5 & 12.4 \\
(0.00,\,0.2,\,0.65) & 0.83 & 9.0 & 2.5 & 2.2 & 2.6 & 2.9 & 7.1 & 13.6 & 12.0 & 10.1 \\
(0.00,\,0.2,\,0.80) & 0.81 & 10.9 & 1.4 & 1.9 & 1.9 & 1.8 & 5.7 & 10.1 & 9.1 & 7.9 \\
(0.00,\,0.3,\,0.50) & 0.91 & 6.3 & 3.5 & 4.1 & 4.3 & 4.0 & 10.6 & 18.7 & 16.9 & 14.4 \\
(0.00,\,0.3,\,0.65) & 0.94 & 8.1 & 2.8 & 5.2 & 4.2 & 3.5 & 8.8 & 15.2 & 13.7 & 11.8 \\
(0.00,\,0.3,\,0.80) & 0.93 & 10.3 & 1.9 & 2.3 & 2.1 & 2.0 & 6.7 & 10.3 & 9.4 & 8.5 \\
(0.02,\,0.1,\,0.50) & 0.70 & 7.9 & 4.7 & 7.9 & 6.8 & 5.9 & 6.6 & 13.4 & 11.7 & 9.6 \\
(0.02,\,0.1,\,0.65) & 0.75 & 10.3 & 2.6 & 2.9 & 2.9 & 2.9 & 5.9 & 11.1 & 10.0 & 8.4 \\
(0.02,\,0.1,\,0.80) & 0.76 & 12.5 & 1.2 & 1.1 & 1.3 & 1.4 & 4.4 & 7.7 & 6.8 & 5.9 \\
(0.02,\,0.2,\,0.50) & 0.89 & 7.3 & 3.7 & 4.5 & 4.3 & 4.1 & 8.7 & 16.0 & 14.6 & 12.2 \\
(0.02,\,0.2,\,0.65) & 0.83 & 8.9 & 3.0 & 4.6 & 4.0 & 3.6 & 7.3 & 13.0 & 11.6 & 10.0 \\
(0.02,\,0.2,\,0.80) & 0.84 & 11.5 & 1.4 & 1.8 & 1.7 & 1.7 & 5.4 & 9.1 & 8.3 & 7.2 \\
(0.02,\,0.3,\,0.50) & 1.01 & 6.4 & 4.8 & 6.1 & 6.3 & 5.8 & 9.8 & 17.7 & 15.9 & 13.5 \\
(0.02,\,0.3,\,0.65) & 1.07 & 8.2 & 3.1 & 4.0 & 3.5 & 3.5 & 8.7 & 13.8 & 13.0 & 11.5 \\
(0.02,\,0.3,\,0.80) & 1.10 & 10.2 & 1.7 & 2.0 & 1.8 & 1.8 & 6.3 & 9.9 & 9.0 & 8.0 \\
(0.10,\,0.1,\,0.50) & 0.82 & 8.2 & 4.4 & 6.5 & 5.8 & 5.3 & 7.2 & 13.3 & 12.0 & 10.1 \\
(0.10,\,0.1,\,0.65) & 0.77 & 10.8 & 2.9 & 4.1 & 3.9 & 3.6 & 5.6 & 9.8 & 8.7 & 7.5 \\
(0.10,\,0.1,\,0.80) & 0.62 & 12.4 & 0.7 & 1.6 & 1.2 & 0.9 & 3.7 & 6.7 & 5.8 & 5.1 \\
(0.10,\,0.2,\,0.50) & 0.84 & 6.9 & 4.4 & 6.1 & 5.6 & 5.2 & 8.1 & 15.0 & 13.5 & 11.5 \\
(0.10,\,0.2,\,0.65) & 1.01 & 9.2 & 3.1 & 4.8 & 4.2 & 3.8 & 6.6 & 11.7 & 10.5 & 9.1 \\
(0.10,\,0.2,\,0.80) & 0.97 & 11.7 & 1.2 & 0.7 & 1.2 & 1.4 & 5.1 & 8.0 & 7.3 & 6.5 \\
(0.10,\,0.3,\,0.50) & 0.96 & 6.2 & 4.4 & 7.0 & 6.2 & 5.3 & 10.0 & 17.1 & 15.6 & 13.5 \\
(0.10,\,0.3,\,0.65) & 1.00 & 8.4 & 3.2 & 4.3 & 3.8 & 3.7 & 8.3 & 13.7 & 12.6 & 10.9 \\
(0.10,\,0.3,\,0.80) & 0.93 & 10.1 & 1.9 & 2.3 & 2.4 & 2.1 & 6.1 & 10.1 & 8.9 & 7.7 \\
\midrule
Mean & & & 2.9 & 4.0 & 3.7 & 3.4 & 7.0 & 12.4 & 11.2 & 9.6 \\
\bottomrule
\end{tabular}
\caption{Detailed results for the CDLP baseline policy and $T = 1000$.}\label{tab:raw-cdlp-1000}
\end{table}

\begin{table}[!ht]
\vspace{-4mm}
\centering
\scriptsize
\begin{tabular}{ccrrrrrrrrr}
\toprule
$(\kappa,P_0,\eta)$ & $H$ & $\bar{c}~~$ & \multicolumn{4}{c}{Support} & \multicolumn{4}{c}{Global} \\
\cmidrule(lr){4-7} \cmidrule(lr){8-11}
 & & & $\Delta_{\text{\sf awr}}$ & $\Delta P_1$ & $\Delta P_5$ & $\Delta P_{25}$ & $\Delta_{\text{\sf awr}}$ & $\Delta P_1$ & $\Delta P_5$ & $\Delta P_{25}$ \\
\midrule
(0.00,\,0.1,\,0.50) & 3.31 & 7.8 & 7.9 & 12.0 & 11.2 & 10.0 & 7.9 & 12.0 & 11.1 & 9.9 \\
(0.00,\,0.1,\,0.65) & 3.76 & 10.7 & 14.6 & 18.3 & 17.5 & 16.4 & 14.5 & 18.1 & 17.5 & 16.4 \\
(0.00,\,0.1,\,0.80) & 3.76 & 13.0 & 20.8 & 24.6 & 23.8 & 22.6 & 20.4 & 22.5 & 22.3 & 21.6 \\
(0.00,\,0.2,\,0.50) & 3.82 & 7.1 & 10.7 & 14.4 & 13.8 & 12.7 & 10.7 & 14.4 & 13.7 & 12.6 \\
(0.00,\,0.2,\,0.65) & 3.80 & 9.0 & 17.6 & 22.0 & 21.1 & 19.8 & 17.6 & 20.5 & 20.3 & 19.5 \\
(0.00,\,0.2,\,0.80) & 3.84 & 10.9 & 24.3 & 27.6 & 27.2 & 26.3 & 22.7 & 24.2 & 23.9 & 23.6 \\
(0.00,\,0.3,\,0.50) & 4.34 & 6.3 & 13.1 & 17.7 & 16.8 & 15.4 & 13.2 & 17.8 & 16.8 & 15.4 \\
(0.00,\,0.3,\,0.65) & 4.28 & 8.1 & 18.5 & 22.9 & 22.1 & 20.8 & 18.0 & 21.3 & 20.6 & 19.7 \\
(0.00,\,0.3,\,0.80) & 4.25 & 10.3 & 24.1 & 26.8 & 26.2 & 25.6 & 22.4 & 23.1 & 23.0 & 23.0 \\
(0.02,\,0.1,\,0.50) & 3.43 & 7.9 & 9.1 & 13.6 & 12.5 & 11.1 & 8.9 & 13.6 & 12.3 & 10.9 \\
(0.02,\,0.1,\,0.65) & 3.43 & 10.3 & 13.4 & 17.8 & 16.8 & 15.6 & 13.4 & 17.3 & 16.6 & 15.4 \\
(0.02,\,0.1,\,0.80) & 3.41 & 12.5 & 18.9 & 22.0 & 21.5 & 20.7 & 17.9 & 19.4 & 19.0 & 18.7 \\
(0.02,\,0.2,\,0.50) & 4.03 & 7.3 & 11.9 & 15.9 & 15.0 & 13.8 & 11.8 & 15.4 & 14.7 & 13.7 \\
(0.02,\,0.2,\,0.65) & 3.77 & 8.9 & 16.4 & 20.3 & 19.8 & 18.5 & 16.1 & 19.1 & 18.3 & 17.6 \\
(0.02,\,0.2,\,0.80) & 4.01 & 11.5 & 22.6 & 25.6 & 25.3 & 24.5 & 21.1 & 22.1 & 22.0 & 21.7 \\
(0.02,\,0.3,\,0.50) & 4.25 & 6.4 & 11.9 & 16.1 & 15.3 & 14.0 & 11.9 & 15.9 & 15.1 & 14.0 \\
(0.02,\,0.3,\,0.65) & 4.31 & 8.2 & 18.7 & 23.1 & 22.0 & 20.9 & 18.5 & 21.5 & 20.9 & 20.2 \\
(0.02,\,0.3,\,0.80) & 4.31 & 10.2 & 23.7 & 26.2 & 25.9 & 25.2 & 21.7 & 22.4 & 22.5 & 22.2 \\
(0.10,\,0.1,\,0.50) & 3.67 & 8.2 & 9.5 & 13.4 & 12.6 & 11.4 & 9.4 & 13.1 & 12.4 & 11.3 \\
(0.10,\,0.1,\,0.65) & 3.83 & 10.8 & 14.8 & 18.6 & 17.6 & 16.5 & 14.7 & 17.9 & 17.1 & 16.3 \\
(0.10,\,0.1,\,0.80) & 3.37 & 12.4 & 19.5 & 22.6 & 21.9 & 21.0 & 19.0 & 20.9 & 20.5 & 19.9 \\
(0.10,\,0.2,\,0.50) & 3.85 & 6.9 & 11.7 & 15.8 & 15.1 & 13.8 & 11.8 & 16.1 & 15.1 & 13.9 \\
(0.10,\,0.2,\,0.65) & 4.06 & 9.2 & 17.0 & 21.1 & 20.3 & 19.1 & 16.8 & 19.1 & 18.8 & 18.2 \\
(0.10,\,0.2,\,0.80) & 4.07 & 11.7 & 23.3 & 26.1 & 25.7 & 24.9 & 21.4 & 21.9 & 21.9 & 21.9 \\
(0.10,\,0.3,\,0.50) & 4.25 & 6.2 & 15.3 & 20.3 & 19.1 & 17.6 & 15.3 & 19.6 & 18.8 & 17.6 \\
(0.10,\,0.3,\,0.65) & 4.25 & 8.4 & 18.4 & 22.5 & 21.4 & 20.5 & 17.8 & 19.5 & 19.5 & 19.1 \\
(0.10,\,0.3,\,0.80) & 4.33 & 10.1 & 24.0 & 26.9 & 26.4 & 25.5 & 22.5 & 23.8 & 23.5 & 23.2 \\
\midrule
Mean & & & 16.7 & 20.5 & 19.8 & 18.7 & 16.2 & 19.0 & 18.4 & 17.7 \\
\bottomrule
\end{tabular}
\caption{Detailed results for the uniform-myopic baseline policy and $T = 1000$.}\label{tab:raw-unif-1000}
\end{table}

\renewcommand{\arraystretch}{1.0}

\end{APPENDICES}

\end{document}